\theoremstyle{definition}
\newtheorem{definition}{Definition}[section]
\newtheorem{lemma}[definition]{Lemma}
\newtheorem{theorem}[definition]{Theorem}
\newtheorem{corollary}[definition]{Corollary}
\newtheorem{remark}[definition]{Remark}
\newtheorem{proposition}[definition]{Proposition}
\newcommand{\spe}{{\rm \hskip 0.1em sp\hskip -0.2em}~}
\newcommand{\tr}{{\rm Tr\hskip -0.2em}~}
\newcommand{\vertiii}[1]{{\left\vert\kern-0.25ex\left\vert\kern-0.25ex\left\vert #1 
    \right\vert\kern-0.25ex\right\vert\kern-0.25ex\right\vert}}
\def\a{\alpha}
\def\b{\beta}
\def\|{\parallel}
\def\<{\left \langle}
\def\>{\right \rangle}
\def\s{\sharp}
\newcommand{\NORM}[1]{\left|\!\left|{#1}\right|\!\right|}
\begin{document}

%\title{Some inequalities for spectral geometric mean with applications}
%\date{}
%\maketitle
%\begin{flushright}
%S.Furuichi  (2024.04.19)
%\end{flushright}

% M A K E T I T L E %%%

\title[Some inequalities for spectral geometric mean with applications]{Some inequalities for spectral geometric mean \\ with applications}

\author[S. Furuichi]{Shigeru Furuichi$^1$}
\address{$^1$ Department of Information Science, College of Humanitis and Science, Nihon University, Setagaya-ku, Tokyo, 156-8850, Japan.
\vskip2pt
\,\,\, Department of Mathematics, Saveetha School of Engineering, SIMATS, Thandalam, Chennai -- 602105, Tamilnadu, India.}
\email{furuichi.shigeru@nihon-u.ac.jp}

%\author{Yuki Seo}
\author[Y. Seo]{Yuki Seo$^2$}
\address{$^2$ Department of Mathematics Education, Osaka Kyoiku University, Asahigaoka, Kashiwara, Osaka 582-8582, Japan}
\email{yukis@cc.osaka-kyoiku.ac.jp}

\setcounter{page}{1}
\subjclass[2020]{Primary 47A64; Secondary 15A42, 47A63, 94A17}

\keywords{Spectral geometric mean, operator geometric mean, Kantorovich constant, Operator norm, $\log$--majorization, R\'enyi mean, relative entropy}

%\maketitle

%----------------------------------------------------------%

\begin{abstract}
Recently, the spectral geometric mean has been studied by some papers.
 In this paper, we firstly estimate the H\"{o}lder type inequality of the spectral geometric mean of positive invertible operators on the Hilbert space for all real order in terms of the generalized Kantorovich constant and show the relation between the weighted geometric mean and the spectral geometric one under the usual operator order. Moreover, we show their operator norm version.  Next, in the matrix case, we show the log-majorization for the spectral geometric mean and their applications. Among others, we show the order relation among three quantum Tsallis relative entropies. Finally we give a new lower bound of the Tsallis relative entropy.
\end{abstract}

\maketitle

%---------------------------------------
\section{Introduction}
%---------------------------------------

Let $\mathcal{B}(\mathscr{H})$ be the C$^*$-algebra of all bounded linear operators on a Hilbert space $\mathscr{H}$. $A\in \mathcal{B}(\mathscr{H})$ is said to be positive if $\< Ax,x\> \geq 0$ for all $x\in \mathscr{H}$, and we write $A\geq 0$. If in addition $A$ is invertible, we write $A>0$. The usual operator order $A\geq B$ means $A-B \geq 0$. In addition, for $\a, \b \in {\Bbb R}$, we write $\a \leq A\leq \b$ if $\a I \leq A \leq \b I$ where $I$ is the identity operator. Let $\NORM{A}$ be the operator norm of $A\in \mathcal{B}(\mathscr{H})$. \par

For $A,B > 0$, the weighted geometric mean is defined by 
\begin{equation*}\label{eq_def_geo01}
A\sharp_t B=A^{\frac{1}{2}}\left(A^{-\frac{1}{2}}BA^{-\frac{1}{2}}\right)^t A^{\frac{1}{2}}, \,\, (0\leq t\leq 1),
\end{equation*}
also see \cite{KA}. The weighted spectral geometric mean was also introduced in \cite{LL2007} 
 \begin{equation*}\label{eq_def_Spec_geo01}
 A{{\spe}_{t}}B={{\left( {{A}^{-1}}\sharp B \right)}^{t}}A{{\left( {{A}^{-1}}\sharp B \right)}^{t}}, \quad (0\leq t\leq 1).
 \end{equation*}
For simplicity,  we use the symbols $\sharp$ and $\spe$ instead of $\sharp_{\frac{1}{2}}$ and $\spe_{\frac{1}{2}}$, respectively. Recently, the spectral geometric mean has been studied in relation to other existing means. See \cite{DF2024,GT2022,GK2024} for example.

The generalized Kantorovich constant for $0<m<M$ is defined by
\begin{equation*}%\label{def_K00}
K\left( m,M,t \right)=\frac{\left(m{{M}^{t}}-M{{m}^{t}}\right)}{\left( t-1 \right)\left( M-m \right)}{{\left( \frac{t-1}{t}\frac{{{M}^{t}}-{{m}^{t}}}{m{{M}^{t}}-M{{m}^{t}}} \right)}^{t}}, \quad (t\in \mathbb{R}).
\end{equation*}
It is known that the original Kantorovich constant \cite{K1948} is given by 
$K(m,M,2)=K(m,M,-1)=\dfrac{(M+m)^2}{4Mm}$. We also use
\begin{equation}\label{def_K01}
	K(h,p):=\frac{(h^p-h)}{(p-1)(h-1)}\left(\frac{p-1}{p}\frac{h^p-1}{h^p-h}\right)^p,\quad h>0,\,\,\,\,p\in\mathbb{R}.
\end{equation}
We see that $K(m,M,p)=K(h,p)$ for $h=\frac{M}{m}$. We use the same symbol $K$. For detail, see \cite[Chapter 2]{FMPS2}.

 The following H\"{o}lder type inequality and its reverse for the weighted geometric mean holds for positive operators $A$ and $B$ such that $0<m \leq A,B \leq M$ for some scalars $0<m<M$: For $t\in [0,1]$
\begin{equation} \label{eq:H1}
K\left(\frac{m}{M}, \frac{M}{m}, t\right) \< Ax,x\>^{1-t} \< Bx,x\>^t \leq \< A \s_{t} B x,x\> \leq \< Ax,x\>^{1-t} \< Bx,x\>^t
\end{equation}
for every unit vector $x\in \mathscr{H}$. Unfortunately, the spectral geometric mean version corresponding to the second inequality of \eqref{eq:H1} does not hold in general. Then we have shown the following results in \cite{MFS2023}.
 \begin{theorem}\label{previous_theorem}
Let $A,B\in \mathcal B\left( \mathcal H \right)$ be two positive invertible operators such that $0<m\le A,B\le M$ for some scalars $0<m<M$, and let $0\le t\le 1$. Then for any unit vector $x\in \mathcal H$
\[\frac{K^2\left(\sqrt{\frac{m}{M}},\sqrt{\frac{M}{m}},t\right)}{K(m,M,2)}{{\left\langle Ax,x \right\rangle }^{1-t}}{{\left\langle Bx,x \right\rangle }^{t}}\le \left\langle A{{\spe}_{t}}Bx,x \right\rangle \le K(m^{1+t},M^{1+t},2){{\left\langle Ax,x \right\rangle }^{1-t}}{{\left\langle Bx,x \right\rangle }^{t}}.\]
\end{theorem}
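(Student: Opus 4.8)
The plan is to reduce both inequalities to the known Hölder-type bound \eqref{eq:H1} for the \emph{weighted geometric mean} together with properties of the generalized Kantorovich constant. The starting observation is the algebraic identity relating the spectral geometric mean to the usual geometric mean: writing $C:=A^{-1}\s B$, one has $A\,\spe_t B = C^t A C^t$, and moreover the key structural fact (from \cite{LL2007} and the way $\spe_t$ is built) is that $A\,\spe_t B$ is similar to, and can be compared with, a weighted geometric mean of $A$ and $B$. Concretely I would use that $A\,\spe_t B = (A\,\s_t B)^{1/2}\,U\,(\cdots)$-type relations are not available in general, so instead the cleaner route is: express $\langle A\,\spe_t B\, x,x\rangle = \langle C^t A C^t x, x\rangle = \langle A C^t x, C^t x\rangle = \|A^{1/2}C^t x\|^2$, and then control $C^t x$ in terms of $x$ using the operator bounds on $C$. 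Since $0<m\le A,B\le M$, one gets two-sided scalar bounds on $A^{-1}\s B$, hence on $C=(A^{-1}\s B)^{1/2}$; in particular $\sqrt{m/M}\le C\le\sqrt{M/m}$.

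For the \textbf{upper bound}, I would proceed as follows. Set $y:=C^t x/\|C^t x\|$, a unit vector. Then $\langle A\,\spe_t B\,x,x\rangle = \|C^t x\|^2\,\langle A y,y\rangle$. Now $\|C^t x\|^2 = \langle C^{2t}x,x\rangle$, and $C^{2t}=(A^{-1}\s B)^t$; applying the \emph{second} inequality of \eqref{eq:H1} with the pair $(A^{-1},B)$ and parameter $t$ gives $\langle (A^{-1}\s_t B)x,x\rangle \le \langle A^{-1}x,x\rangle^{1-t}\langle Bx,x\rangle^{t}$. Combining with the estimate $\langle Ay,y\rangle\le M$ (or, better, a sharper Kantorovich-type reversal linking $\langle Ay,y\rangle$ back to $\langle Ax,x\rangle$ after the change of vector) and the elementary bound $\langle A^{-1}x,x\rangle\le K(m,M,2)\langle Ax,x\rangle^{-1}$ from the Kantorovich inequality, one assembles the constant $K(m^{1+t},M^{1+t},2)$. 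The exponent $1+t$ appears because the product $C^{2t}A$ effectively has bounds between $m^{t}\cdot m = m^{1+t}$ and $M^{t}\cdot M = M^{1+t}$, and the Kantorovich constant at these endpoints is exactly what measures the gap between $\langle C^{2t}Ax,x\rangle$-type quantities and the product $\langle A x,x\rangle^{1-t}\langle B x,x\rangle^{t}$.

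For the \textbf{lower bound}, I would run the same decomposition in reverse, using the \emph{first} inequality of \eqref{eq:H1} (the one carrying $K(\sqrt{m/M},\sqrt{M/m},t)$) applied with the square-root bounds on $C$, which is why that Kantorovich constant enters squared — once from estimating $\|C^tx\|^2=\langle (A^{-1}\s_t B)x,x\rangle$ from below and once from re-expanding $\langle Ay,y\rangle$ in terms of the original vector $x$ — while the single factor $K(m,M,2)^{-1}$ in the denominator comes from the one reverse Kantorovich step needed to pass from $\langle A^{-1}x,x\rangle^{-1}$ back to $\langle Ax,x\rangle$. Throughout I would use the monotonicity and the normalization $K(1,t)=1$, $K(h,2)=K(h,-1)$, and the identity $K(m,M,t)=K(M/m,t)$ recorded after \eqref{def_K01}.

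The \textbf{main obstacle} I anticipate is bookkeeping the change of vector $x\mapsto y=C^tx/\|C^tx\|$: the inequality \eqref{eq:H1} is stated for unit vectors, so every time I rewrite $\langle A y,y\rangle$ I must reintroduce a Kantorovich-type correction factor, and the art is to route these corrections so that the \emph{upper} estimate collects them into the single clean constant $K(m^{1+t},M^{1+t},2)$ and the \emph{lower} estimate into $K^2(\sqrt{m/M},\sqrt{M/m},t)/K(m,M,2)$ — rather than a messier product. A secondary technical point is verifying that the scalar bounds $\sqrt{m/M}\le A^{-1}\s B \le \sqrt{M/m}$ are the correct ones (this follows from $m\le A,B\le M$ via operator monotonicity of $t\mapsto t^{1/2}$ and the standard sandwich for $\s$), since the exponents in the final constants are quite sensitive to getting these endpoints right.
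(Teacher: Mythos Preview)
This theorem is quoted from \cite{MFS2023} and not proved in the present paper, but the paper's proof of the companion Theorem~\ref{thm-2} (the case $t\notin(0,1)$) shows the intended method, and comparing with it exposes a genuine gap in your plan.

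The problem is the change-of-vector step. After writing $\langle A\spe_t B\,x,x\rangle=\|C^tx\|^2\langle Ay,y\rangle$ with $y=C^tx/\|C^tx\|$, you have no tool to compare $\langle Ay,y\rangle$ with $\langle Ax,x\rangle$: Kantorovich-type reversals relate $\langle A^{-1}x,x\rangle$ to $\langle Ax,x\rangle^{-1}$ at the \emph{same} vector, not $\langle Ay,y\rangle$ to $\langle Ax,x\rangle$ at different ones. With only the crude $m\le\langle Ay,y\rangle\le M$ available you pick up an extra full condition-number factor, strictly worse than the stated constants; this is precisely the ``main obstacle'' you flag, and it does not dissolve into bookkeeping. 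There is also an algebraic slip that compounds the issue: with $C=A^{-1}\s B$ one has $C^{2t}=(A^{-1}\s B)^{2t}$, not $(A^{-1}\s B)^t$, and neither of these equals $A^{-1}\s_t B$, so invoking \eqref{eq:H1} for the pair $(A^{-1},B)$ with weight $t$ does not bound $\langle C^{2t}x,x\rangle$.

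The device that avoids changing vectors is the Riccati identity \eqref{eq:sgm}: setting $X=A\spe_t B$ one has $A^{-1}\s X=(A^{-1}\s B)^t$. Then the Ando inequality and its Kantorovich reverse (Lemma~\ref{lemma2.1}(iv)) give two-sided bounds between $\langle A^{-1}\s X\,x,x\rangle$ and $\sqrt{\langle A^{-1}x,x\rangle\langle Xx,x\rangle}$ \emph{at the same unit vector $x$}; the $t$-th power $\langle(A^{-1}\s B)^tx,x\rangle$ is compared with $\langle A^{-1}\s B\,x,x\rangle^t$ via Lemma~\ref{lemma2.1}(ii)-type estimates; and a single application of Lemma~\ref{lemma2.1}(iii) converts $\langle A^{-1}x,x\rangle$ into $\langle Ax,x\rangle^{-1}$. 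Every step keeps the vector fixed, so the constants assemble cleanly into those stated. This is exactly how the paper argues in Theorem~\ref{thm-2}, and the $t\in[0,1]$ case in \cite{MFS2023} follows the same template with the power inequalities running in the concave direction.
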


 Though it is known in \cite{GT2022} for the case of matrices that 
\begin{equation} \label{eq:ongs}
\NORM{A\s_{t}B} \leq \NORM{A{{\spe}_{t}}B} \qquad \mbox{for all $t\in [0,1]$},
\end{equation}
there is no usual L\"owner order relation between the weighted geometric mean and the spectral geometric one. Thus, applying the bound of the generalized Kantorovich constant with Theorem \ref{previous_theorem}, we also obtained the following result in \cite{MFS2023}.
\begin{proposition}\label{previous_proposition}
Let $A,B\in \mathcal B\left( \mathcal H \right)$ be two positive invertible operators such that $0<m\le A,B\le M$ for some scalars $0<m<M$, and let $0\le t\le 1$. Then
\begin{equation*}%\label{previous_proposition_ineq01}
\Gamma(m,M,t)^{-1}A{{\sharp }_{t}}B\le \eta(m,M,t)^{-1}A{{\sharp }_{t}}B \le A{{\spe}_{t}}B\le \eta(m,M,t)A{{\sharp }_{t}}B\le \Gamma(m,M,t)A{{\sharp }_{t}}B,
\end{equation*}
where 
$$
\eta(m,M,t) :=\dfrac{K(m,M,2)}{{{K}^{2}}\left( \sqrt{\frac{m}{M}},\sqrt{\frac{M}{m}},t \right)},\quad \text{and}\quad \Gamma(m,M,t):=\dfrac{K(m^{1+t},M^{1+t},2)}{K\left( \frac{m}{M},\frac{M}{m},t \right)}.
$$
\end{proposition}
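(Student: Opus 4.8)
The plan is to transport the two-sided estimate of Theorem~\ref{previous_theorem}, which is phrased through the quantity $\<Ax,x\>^{1-t}\<Bx,x\>^{t}$, into a two-sided comparison between $A{{\spe}_{t}}B$ and $A\sharp_{t}B$ by inserting the H\"older type inequality \eqref{eq:H1}, and then to recover the remaining ``reverse'' halves of the chain by an inversion duality. Fix a unit vector $x\in\mathcal H$. Pairing the lower bound of Theorem~\ref{previous_theorem}, namely $\eta(m,M,t)^{-1}\<Ax,x\>^{1-t}\<Bx,x\>^{t}\le\<A{{\spe}_{t}}Bx,x\>$, with the second inequality of \eqref{eq:H1}, namely $\<A\sharp_{t}Bx,x\>\le\<Ax,x\>^{1-t}\<Bx,x\>^{t}$, gives $\eta(m,M,t)^{-1}\<A\sharp_{t}Bx,x\>\le\<A{{\spe}_{t}}Bx,x\>$. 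Pairing the upper bound of Theorem~\ref{previous_theorem}, namely $\<A{{\spe}_{t}}Bx,x\>\le K(m^{1+t},M^{1+t},2)\<Ax,x\>^{1-t}\<Bx,x\>^{t}$, with the first inequality of \eqref{eq:H1}, namely $K(\frac{m}{M},\frac{M}{m},t)\<Ax,x\>^{1-t}\<Bx,x\>^{t}\le\<A\sharp_{t}Bx,x\>$, and recalling $\Gamma(m,M,t)=K(m^{1+t},M^{1+t},2)/K(\frac{m}{M},\frac{M}{m},t)$, gives $\<A{{\spe}_{t}}Bx,x\>\le\Gamma(m,M,t)\<A\sharp_{t}Bx,x\>$. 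Since these hold for every unit vector, we obtain the operator inequalities $\eta(m,M,t)^{-1}A\sharp_{t}B\le A{{\spe}_{t}}B\le\Gamma(m,M,t)A\sharp_{t}B$.

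For the two missing inequalities, $A{{\spe}_{t}}B\le\eta(m,M,t)A\sharp_{t}B$ and $\Gamma(m,M,t)^{-1}A\sharp_{t}B\le A{{\spe}_{t}}B$, I would use the inversion formulas $(A\sharp_{t}B)^{-1}=A^{-1}\sharp_{t}B^{-1}$, a standard Kubo--Ando property \cite{KA}, together with $(A{{\spe}_{t}}B)^{-1}=A^{-1}{{\spe}_{t}}B^{-1}$, which follows in one line by writing $A{{\spe}_{t}}B=CAC$ with $C=(A^{-1}\sharp B)^{t}$ and using $(A^{-1}\sharp B)^{-1}=A\sharp B^{-1}$, so that $(A{{\spe}_{t}}B)^{-1}=C^{-1}A^{-1}C^{-1}=(A\sharp B^{-1})^{t}A^{-1}(A\sharp B^{-1})^{t}=A^{-1}{{\spe}_{t}}B^{-1}$. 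Applying the operator inequality just proved to the pair $A^{-1},B^{-1}$, which satisfies $0<1/M\le A^{-1},B^{-1}\le1/m$, and then inverting, which reverses the operator order, produces exactly these two inequalities. Here one must know that $\eta$ and $\Gamma$ are unchanged under $(m,M)\mapsto(1/M,1/m)$; this holds because the generalized Kantorovich constant depends only on the ratio of its first two arguments (see \cite[Chapter~2]{FMPS2}), so the inversion step reproduces the very same constants rather than new ones.

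It then only remains to line the four inequalities up into the single displayed chain, for which one needs the purely scalar fact $1\le\eta(m,M,t)\le\Gamma(m,M,t)$. Setting $h=M/m$, one has $\eta=K(h,2)/K(h,t)^{2}$ and $\Gamma=K(h^{1+t},2)/K(h^{2},t)$, and since $0<K(h,t)\le1$ for $t\in[0,1]$ while $s\mapsto K(s,2)$ is $\ge1$ and increasing on $[1,\infty)$, the claim reduces to the estimate $K(h^{2},t)\le K(h,t)^{2}$; all of this is a routine comparison among generalized Kantorovich constants drawn from \cite[Chapter~2]{FMPS2}. I do not expect any serious obstacle: the statement is in essence a corollary of Theorem~\ref{previous_theorem} and \eqref{eq:H1}, and the only place where genuine computation enters is this last scalar comparison $\eta\le\Gamma$, while the one point deserving a little care is the invariance of the Kantorovich constants under inversion of the spectral bounds, on which the duality argument of the previous paragraph relies.
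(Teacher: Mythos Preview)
Your proof is correct and follows essentially the same approach as the paper (which in turn cites \cite{MFS2023}): combine Theorem~\ref{previous_theorem} with the H\"older-type inequality \eqref{eq:H1} to get two of the four operator inequalities, then use the self-duality $(A\sharp_t B)^{-1}=A^{-1}\sharp_t B^{-1}$, $(A{{\spe}_{t}}B)^{-1}=A^{-1}{{\spe}_{t}}B^{-1}$ together with the invariance of the Kantorovich constants under $(m,M)\mapsto(1/M,1/m)$ to obtain the other two. The paper carries out exactly this duality argument in the analogous situation of Corollary~2.2, and the scalar inequality $K(h^2,t)\le K(h,t)^2$ for $t\in[0,1]$ that you isolate for $\eta\le\Gamma$ is indeed the content of \cite[Corollary~3.2]{MFS2023}.
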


 Moreover, to investigate the properties of the spectral geometric means, we would like to discuss the above in the case of $t\not\in [0,1]$. For this purpose, we adopt the following symbols:
\begin{equation*}%\label{eq_def_geo02}
A\hat\sharp_t B=A^{\frac{1}{2}}\left(A^{-\frac{1}{2}}BA^{-\frac{1}{2}}\right)^t A^{\frac{1}{2}}, \quad (t\in\mathbb{R})
\end{equation*}
and
\begin{equation*}%\label{eq_def_Spec_geo02}
A{{\hat \spe}_{t}}B={{\left( {{A}^{-1}}\sharp B \right)}^{t}}A{{\left( {{A}^{-1}}\sharp B \right)}^{t}}, \quad (t\in\mathbb{R}).
\end{equation*}
It is known \cite[Proposition 4.1]{LL2007} that $A\hat \spe_tB$ has a unique positive definite solution $X$ for the equation: 
\begin{equation} \label{eq:sgm}
{{\left( {{A}^{-1}}\sharp B \right)}^{t}}={{A}^{-1}}\sharp X,\quad (t\in\mathbb{R}).
\end{equation}

 In this paper, we firstly estimate the H\"{o}lder type inequality of the spectral geometric mean for all real order in terms of the generalized Kantorovich constant and show the relation between the weighted geometric mean and the weighted spectral geometric one under the usual operator order. Moreover, we show their operator norm version.  Next, in the matrix case, we show the log-majorization for the spectral geometric mean and their applications. Among others, we show the order relation among three quantum Tsallis relative entropies.\par
\medskip

%----------------------------------------------
\section{Complementary inequalities for spectral geometric mean with geometric mean}
%----------------------------------------------

To state one of our main results, we prepare the following lemma:
\begin{lemma}\label{lemma2.1}
Let $p \notin (0,1)$, $0<m\le M$, $A,B >0$, and $x\in \mathscr{H}$ be a unit vector.  %Then for every unit vector $x\in \mathscr{H}$
\begin{itemize}
\item[(i)] For $mA\le B \le MA$, 
$$
\langle Ax,x\rangle^{1-p}\langle Bx,x\rangle^p\le \langle A\hat\sharp_p Bx,x\rangle \le K(m,M,p)\langle Ax,x\rangle^{1-p}\langle Bx,x\rangle^p.
$$
\item[(ii)] For $m\le A \le M$, 
$$
\langle Ax,x\rangle^p\le \langle A^px,x\rangle \le K(m,M,p)\langle Ax,x\rangle^p.
$$ 
\item[(iii)] $\langle Ax,x\rangle^{-1}\le \langle A^{-1}x,x\rangle \le K(m,M,2)\langle Ax,x\rangle^{-1}.$
\item[(iv)] For $m\le A\le M$ and $n\le B\le N$, 
$$
\langle A\sharp Bx,x\rangle \le \sqrt{\langle Ax,x\rangle\langle Bx,x\rangle}\le K^{1/2}\left(\sqrt{mn},\sqrt{MN},2\right)\langle A\sharp Bx,x\rangle.
$$
\end{itemize}
\end{lemma}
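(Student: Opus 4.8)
The plan is to reduce all four items to the scalar (single--state) Kantorovich inequality for the power function, which is recorded in \cite[Chapter 2]{FMPS2}: for a positive invertible operator $T$ with $m\le T\le M$ and a unit vector $x$,
\[
\langle Tx,x\rangle^{p}\le\langle T^{p}x,x\rangle\le K(m,M,p)\,\langle Tx,x\rangle^{p}\qquad(p\notin(0,1)),
\]
\[
K(m,M,p)\,\langle Tx,x\rangle^{p}\le\langle T^{p}x,x\rangle\le\langle Tx,x\rangle^{p}\qquad(p\in(0,1)).
\]
Here the inner inequalities are Jensen's inequality for the state $\langle\,\cdot\,x,x\rangle$ applied to the convex (resp.\ concave) function $t\mapsto t^{p}$, while the outer ones follow by bounding $t^{p}$ above (resp.\ below) on $[m,M]$ by its chord $\ell$ and then estimating $\ell(\langle Tx,x\rangle)$ against $\langle Tx,x\rangle^{p}$, the resulting optimal constant being exactly $K(m,M,p)$ by the computation in \cite[Chapter 2]{FMPS2}. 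Item (ii) is precisely the first display, so nothing further is needed there.

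For (i) I put $C:=A^{-1/2}BA^{-1/2}$, so that $mA\le B\le MA$ is equivalent to $m\le C\le M$ and $A\hat\sharp_{p}B=A^{1/2}C^{p}A^{1/2}$. Writing $y:=A^{1/2}x$ (nonzero since $A>0$) and $u:=y/\NORM{y}$, a direct computation gives $\langle A\hat\sharp_{p}Bx,x\rangle=\NORM{y}^{2}\langle C^{p}u,u\rangle$, $\langle Bx,x\rangle=\NORM{y}^{2}\langle Cu,u\rangle$ and $\langle Ax,x\rangle=\NORM{y}^{2}$, whence $\langle Ax,x\rangle^{1-p}\langle Bx,x\rangle^{p}=\NORM{y}^{2}\langle Cu,u\rangle^{p}$. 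Dividing the desired chain of inequalities by $\NORM{y}^{2}$ reduces (i) to (ii) for $C$ and the unit vector $u$. Item (iii) is (ii) specialized to $p=-1$, using $K(m,M,-1)=K(m,M,2)$ recalled in the Introduction; alternatively the left inequality there is the Cauchy--Schwarz inequality $1=\langle x,x\rangle\le\langle Ax,x\rangle^{1/2}\langle A^{-1}x,x\rangle^{1/2}$ and the right one is the classical Kantorovich inequality.

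For (iv) I use the same change of variables with $p=\tfrac12$: with $C=A^{-1/2}BA^{-1/2}$ and $u=A^{1/2}x/\NORM{A^{1/2}x}$ one gets $\langle A\sharp Bx,x\rangle=\langle Ax,x\rangle\,\langle C^{1/2}u,u\rangle$ and $\langle Bx,x\rangle=\langle Ax,x\rangle\,\langle Cu,u\rangle$, so that after cancelling the positive factor $\langle Ax,x\rangle$ both inequalities of (iv) become the scalar statements $\langle C^{1/2}u,u\rangle\le\langle Cu,u\rangle^{1/2}$ and $\langle Cu,u\rangle^{1/2}\le K^{1/2}(\sqrt{mn},\sqrt{MN},2)\,\langle C^{1/2}u,u\rangle$. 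The first is Jensen's inequality for the concave function $t\mapsto t^{1/2}$. For the second, the hypotheses $m\le A\le M$ and $n\le B\le N$ force $n/M\le C\le N/m$, so the reverse scalar Kantorovich inequality for $t^{1/2}$ (the $p\in(0,1)$ display above, rearranged) yields $\langle Cu,u\rangle^{1/2}\le K(n/M,N/m,\tfrac12)^{-1}\langle C^{1/2}u,u\rangle$, and $K(n/M,N/m,\tfrac12)=K\!\left(\tfrac{MN}{mn},\tfrac12\right)$. It then remains to identify the constant: from the closed forms $K(h,\tfrac12)=\frac{2h^{1/4}}{1+\sqrt h}$ and $K(a,b,2)=\frac{(a+b)^{2}}{4ab}$ one checks directly the identity $K(h,\tfrac12)^{-1}=K(\sqrt h,2)^{1/2}$, and taking $h=MN/mn$ gives $K(n/M,N/m,\tfrac12)^{-1}=K^{1/2}(\sqrt{mn},\sqrt{MN},2)$, which completes (iv).

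The algebraic identities such as $\langle A\hat\sharp_{p}Bx,x\rangle=\NORM{y}^{2}\langle C^{p}u,u\rangle$, together with the verification of $K(h,\tfrac12)^{-1}=K(\sqrt h,2)^{1/2}$, are purely routine. The only point I expect to require genuine care is in (iv): one must notice that after the substitution the spectral bounds of $C$ have ratio $MN/mn$ rather than $M/m$ or $N/n$, and then reconcile this with the prescribed constant $K(\sqrt{mn},\sqrt{MN},2)$ through the above identity relating the Kantorovich constants for the exponents $\tfrac12$ and $2$; everything else is a direct appeal to the Kantorovich / Mond--Pe\v{c}ari\'{c} estimates already quoted in the paper.
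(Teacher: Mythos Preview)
Your proof is correct. The paper's own argument proceeds almost entirely by citation: (i) is referred to \cite[Lemma~8]{BLMS2009}, (ii) and (iii) are obtained from (i) by specialising $A:=I$ and then $p:=-1$, and for (iv) the first inequality is quoted as a case of Ando's inequality \cite{A1979} and the second as \cite[Theorem~4]{L2009}. Your approach rests on the same Mond--Pe\v{c}ari\'{c}/Kantorovich scalar inequality and the same change of variables $C=A^{-1/2}BA^{-1/2}$, so for (i)--(iii) the two proofs are essentially identical (you derive (i) from (ii), the paper derives (ii) from (i), which is the same reduction read in the opposite direction).

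The one place where your route is genuinely more elementary is (iv). Instead of invoking the Ando inequality and the Lee reverse Cauchy--Schwarz result as black boxes, you observe that after the substitution both inequalities reduce to the scalar Kantorovich estimates for the concave power $t\mapsto t^{1/2}$ on the interval $[n/M,N/m]$, and then you identify the constant via the identity $K(h,\tfrac12)^{-1}=K(\sqrt{h},2)^{1/2}$ with $h=MN/(mn)$. This is a clean self-contained computation; what it buys is that (iv) is seen to be exactly the $p=\tfrac12$ instance of (i), rather than requiring separate tools. The trade-off is that the paper's cited results (Ando, Lee) are stronger statements in their own right, while your argument is tailored to this particular consequence.
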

\begin{proof}
(i) can be proven by the similar way to \cite[Lemma 8]{BLMS2009}. Putting $A:=I$ and $B:=A$ in (i), we have (ii). (iii) is also proven by taking $p:=-1$ in (ii) with $K(m,M,-1)=K(m,M,2)$. The first inequality in (iv) is a special case of the Ando inequality \cite{A1979} and the second inequality is proven from \cite[Theorem 4]{L2009}.
\end{proof}

\begin{theorem} \label{thm-2}
Let $A,B\in \mathcal B\left( \mathcal H \right)$  be two positive invertible operators such that $0<m\le A,B\le M$ for some scalars $0<m<M$, and let $t\in\mathbb{R}$. 
\begin{itemize}
\item[(i)] For $t\ge 1$ and any unit vector $x\in \mathcal H$,
\begin{align}\label{thm2.1_eq01}
K^{-t}(m,M,2)& \langle Ax,x\rangle^{1-t}\langle Bx,x\rangle^t \le \langle A\hat \spe_t Bx,x\rangle \notag \\
& \le K(m^{1+t},M^{1+t},2)K^2(m,M,t) K^{t-1}(m,M,2) \langle Ax,x\rangle^{1-t}\langle Bx,x\rangle^t.
\end{align}
\item[(ii)] For $t\le 0$ and any unit vector $x\in \mathcal H$,
\begin{align}\label{thm2.1_eq02}
K^{t-1}(m,M,2) & \langle Ax,x\rangle^{1-t}\langle Bx,x\rangle^t \le \langle A\hat \spe_t Bx,x\rangle \notag \\
& \le K(m^{t-1},M^{t-1},2)K^{2}(m,M,t) K^{-t}(m,M,2) \langle Ax,x\rangle^{1-t}\langle Bx,x\rangle^t.
\end{align}
\end{itemize}
\end{theorem}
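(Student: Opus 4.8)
The plan is to reduce both inequalities to the complementary estimates of Lemma~\ref{lemma2.1} by means of the defining relation~\eqref{eq:sgm}, rather than manipulating $(A^{-1}\sharp B)^{t}A(A^{-1}\sharp B)^{t}$ head-on. Put $C:=A^{-1}\sharp B$ and $X:=A\hat\spe_t B$, so that by~\eqref{eq:sgm} we have $A^{-1}\sharp X=(A^{-1}\sharp B)^{t}=C^{t}$. Applying the joint monotonicity of the operator geometric mean to $(1/M)I\le A^{-1}\le(1/m)I$ and $mI\le B\le MI$ gives $\sqrt{m/M}\,I\le C\le\sqrt{M/m}\,I$; since the ratio of these scalars is $M/m$, every Kantorovich constant built on the interval $[\sqrt{m/M},\sqrt{M/m}]$ collapses to the one with ratio $M/m$, e.g.\ $K(\sqrt{m/M},\sqrt{M/m},p)=K(m,M,p)$. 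Conjugating $mI\le A\le MI$ by $C^{t}$ and using the bounds on $C$ then yields $m^{1+t}M^{-t}I\le X\le M^{1+t}m^{-t}I$ when $t\ge1$, and $m^{1-t}M^{t}I\le X\le M^{1-t}m^{t}I$ when $t\le0$.

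The key step is to apply Lemma~\ref{lemma2.1}(iv) to the pair $A^{-1},X$, whose geometric mean is $A^{-1}\sharp X=C^{t}$: with the interval $[1/M,1/m]$ for $A^{-1}$ and the interval just found for $X$, the quantities $\sqrt{mn}$ and $\sqrt{MN}$ of that lemma become $(m/M)^{(1+t)/2}$ and $(M/m)^{(1+t)/2}$ for $t\ge1$ (and the same with $1-t$ for $t\le0$), so the lemma supplies a factor $K^{1/2}(m^{1+t},M^{1+t},2)$, resp.\ $K^{1/2}(m^{t-1},M^{t-1},2)$, and after squaring we get
$$\frac{\langle C^{t}x,x\rangle^{2}}{\langle A^{-1}x,x\rangle}\ \le\ \langle Xx,x\rangle\ \le\ K_{0}\,\frac{\langle C^{t}x,x\rangle^{2}}{\langle A^{-1}x,x\rangle},$$
with $K_{0}=K(m^{1+t},M^{1+t},2)$ in case (i) and $K_{0}=K(m^{t-1},M^{t-1},2)$ in case (ii). It then remains to unwind the ratio on the right. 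Since $t\notin(0,1)$, Lemma~\ref{lemma2.1}(ii) applied to $C$ gives $\langle Cx,x\rangle^{t}\le\langle C^{t}x,x\rangle\le K(m,M,t)\langle Cx,x\rangle^{t}$, hence $\langle Cx,x\rangle^{2t}\le\langle C^{t}x,x\rangle^{2}\le K^{2}(m,M,t)\langle Cx,x\rangle^{2t}$. Next, Lemma~\ref{lemma2.1}(iv) applied to $C=A^{-1}\sharp B$ (intervals $[1/M,1/m]$ and $[m,M]$, constant $K^{1/2}(m,M,2)$), followed by raising to the power $2t$, replaces $\langle Cx,x\rangle^{2t}$ by $\langle A^{-1}x,x\rangle^{t}\langle Bx,x\rangle^{t}$ up to a factor $K^{-t}(m,M,2)$, while Lemma~\ref{lemma2.1}(iii), followed by raising to the power $t-1$, replaces the leftover $\langle A^{-1}x,x\rangle^{t-1}$ by $\langle Ax,x\rangle^{1-t}$ up to a factor $K^{t-1}(m,M,2)$. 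Multiplying the resulting estimates and collecting the powers of $K(m,M,2)$ produces exactly~\eqref{thm2.1_eq01} and~\eqref{thm2.1_eq02}.

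Each of these computations is short; the delicate point is the sign bookkeeping. In passing from $t\ge1$ to $t\le0$ the maps $u\mapsto u^{2t}$ and $u\mapsto u^{t-1}$ both reverse order, and this is precisely what redistributes the factors $K^{-t}(m,M,2)$ and $K^{t-1}(m,M,2)$ between the upper and lower estimates; moreover Lemma~\ref{lemma2.1}(i)--(ii) are available only when the exponent involved avoids $(0,1)$, so one must check $t,2t\notin(0,1)$ at each use, which is exactly why the range $(0,1)$ is excluded and the endpoints $t=0,1$ handled separately. The one move that is not a routine manipulation is the reduction in the second paragraph: recognising that~\eqref{eq:sgm} lets $\langle A\hat\spe_t Bx,x\rangle$ be sandwiched between constant multiples of $\langle C^{t}x,x\rangle^{2}/\langle A^{-1}x,x\rangle$ via the Cauchy--Schwarz type Lemma~\ref{lemma2.1}(iv) is what keeps the whole argument elementary.
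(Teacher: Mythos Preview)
Your proposal is correct and follows essentially the same route as the paper's proof: set $X=A\hat\spe_t B$, use the defining relation $A^{-1}\sharp X=(A^{-1}\sharp B)^{t}$, apply Lemma~\ref{lemma2.1}(iv) to the pair $(A^{-1},X)$ to sandwich $\langle Xx,x\rangle$ between multiples of $\langle C^{t}x,x\rangle^{2}/\langle A^{-1}x,x\rangle$, then successively invoke (ii), (iv), and (iii) to reach $\langle Ax,x\rangle^{1-t}\langle Bx,x\rangle^{t}$, with the sign of $t$ governing which side each Kantorovich factor lands on. The only cosmetic differences are organisational (you first isolate the sandwich and then unwind, whereas the paper chains the inequalities in one pass); the bounds on $C$ and $X$, the identification of the constants $K(m^{1+t},M^{1+t},2)$ and $K(m^{t-1},M^{t-1},2)$, and the sign bookkeeping all match. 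One minor remark: the check ``$2t\notin(0,1)$'' is unnecessary, since raising the scalar $\langle Cx,x\rangle$ to the power $2t$ is a purely numerical step and only the application of Lemma~\ref{lemma2.1}(ii) with exponent $t$ requires $t\notin(0,1)$.
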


\begin{proof}
Firstly, for $m\le A,B\le M$, we note that $\dfrac{1}{M}\le A^{-1}\le \dfrac{1}{m}$,  $\sqrt{\dfrac{m}{M}}\le A^{-1}\sharp B\le \sqrt{\dfrac{M}{m}}$ and 
\[
\dfrac{m^{1+t}}{M^t}\le  A\hat \spe_t B \le \dfrac{M^{1+t}}{m^t} \quad \mbox{for $t\geq 1$} \quad \mbox{and} \quad \dfrac{M^{t}}{m^{t-1}}\le  A\hat \spe_t B \le \dfrac{m^t}{M^{t-1}} \quad \mbox{for $t\leq 0$.}
\]

Let $X:=A\hat \spe_t B$ for $t\notin (0,1)$.
By the first inequalities in Lemma \ref{lemma2.1} (iv) and (ii), it follows from \eqref{eq:sgm} that for  $t\notin (0,1)$
\begin{equation}\label{thm2.1_eq03}
\sqrt{\langle A^{-1}x,x\rangle\langle Xx,x\rangle}  \ge \langle A^{-1}\sharp Xx,x\rangle
=\langle \left(A^{-1}\sharp B\right)^tx,x\rangle \ge \langle A^{-1}\sharp Bx,x\rangle^t. 
\end{equation}
By the second inequality in Lemma \ref{lemma2.1} (iv), the right hand side in \eqref{thm2.1_eq03} is bounded from below for $t\ge 1$,
$$
\langle A^{-1}\sharp Bx,x\rangle^t \ge K^{-t/2}\left(\sqrt{\frac{m}{M}},\sqrt{\frac{M}{m}},2\right)\langle A^{-1}x,x\rangle^{t/2}\langle Bx,x\rangle^{t/2}=K^{-t/2}\left(m,M,2\right)\langle A^{-1}x,x\rangle^{t/2}\langle Bx,x\rangle^{t/2}.
$$
This with \eqref{thm2.1_eq03} gives
$$
\langle A\hat \spe_tBx,x\rangle \ge K^{-t}\left(m,M,2\right)\langle A^{-1}x,x\rangle^{t-1}\langle Bx,x\rangle^{t}\ge K^{-t}\left(m,M,2\right)\langle Ax,x\rangle^{1-t}\langle Bx,x\rangle^{t}
$$
by the first inequality in Lemma \ref{lemma2.1} (iii). Thus we have the first inequality in \eqref{thm2.1_eq01}.\par
By the first inequality in Lemma \ref{lemma2.1} (iv), the right hand side in \eqref{thm2.1_eq03} is bounded from below for $t\le 0$,
$$
\langle A^{-1}\sharp Bx,x\rangle^t \ge \langle A^{-1}x,x\rangle^{t/2}\langle Bx,x\rangle^{t/2}.
$$
This with \eqref{thm2.1_eq03} gives
$$
\langle A\hat \spe_tBx,x\rangle \ge  \langle A^{-1}x,x\rangle^{t-1}\langle Bx,x\rangle^{t}\ge K^{t-1}(m,M,2) \langle Ax,x\rangle^{1-t}\langle Bx,x\rangle^{t}
$$
by the second inequality in Lemma \ref{lemma2.1} (iii).
Thus we have the first inequality in \eqref{thm2.1_eq02}.\par

By the second inequalities in Lemma \ref{lemma2.1} (iv), (ii) and the first inequality in in Lemma \ref{lemma2.1} (iv), it follows that for $t\geq 1$
\begin{align}
\sqrt{\langle A^{-1}x,x\rangle\langle Xx,x\rangle} & \le K^{1/2}\left(\left(\frac{m}{M}\right)^{\frac{1+t}{2}},\left(\frac{M}{m}\right)^{\frac{1+t}{2}},2\right)\langle A^{-1}\sharp Xx,x\rangle \nonumber \\
&=K^{1/2}\left(m^{1+t},M^{1+t},2\right)\langle \left(A^{-1}\sharp B\right)^tx,x\rangle  \nonumber \\
&\le K^{1/2}\left(m^{1+t},M^{1+t},2\right)K\left(\sqrt{\frac{m}{M}},\sqrt{\frac{M}{m}},t\right)\langle A^{-1}\sharp Bx,x\rangle^t \nonumber \\
&=K^{1/2}\left(m^{1+t},M^{1+t},2\right)K(m,M,t)\langle A^{-1}\sharp Bx,x\rangle^t\nonumber \\
&\le K^{1/2}\left(m^{1+t},M^{1+t},2\right)K(m,M,t)\langle A^{-1}x,x\rangle^{t/2}\langle Bx,x\rangle^{t/2}.
\label{thm2.1_eq04}
\end{align}
Hence it follows from \eqref{thm2.1_eq04} that
\begin{align*}
\langle A\hat \spe_tBx,x\rangle &\le K\left(m^{1+t},M^{1+t},2\right)K^2(m,M,t)\langle A^{-1}x,x\rangle^{t-1}\langle Bx,x\rangle^t \\
&\le  K\left(m^{1+t},M^{1+t},2\right)K^2(m,M,t) K^{t-1}(m,M,2) \langle Ax,x\rangle^{1-t}\langle Bx,x\rangle^t \quad \mbox{for $t\geq 1$}
\end{align*}
by the second inequality in Lemma \ref{lemma2.1} (iii). Thus we have the second inequality in \eqref{thm2.1_eq01}. \par
For $t \leq 0$, by the second inequalities in Lemma \ref{lemma2.1} (iv), (ii) and (iv), we have 
\begin{align}
\sqrt{\langle A^{-1}x,x\rangle\langle Xx,x\rangle} & \le K^{1/2}\left(\left(\frac{M}{m}\right)^{\frac{t-1}{2}},\left(\frac{m}{M}\right)^{\frac{t-1}{2}},2\right)\langle A^{-1}\sharp Xx,x\rangle \nonumber \\
&=K^{1/2}\left(M^{t-1},m^{t-1},2\right)\langle \left(A^{-1}\sharp B\right)^tx,x\rangle  \nonumber \\
&\le K^{1/2}\left(M^{t-1},m^{t-1},2\right)K(m,M,t)\langle A^{-1}\sharp Bx,x\rangle^t \nonumber \\
& \leq K^{1/2}\left(M^{t-1},m^{t-1},2\right)K(m,M,t) K^{-\frac{t}{2}}(m,M,2) \langle A^{-1}x,x\rangle^{t/2}\langle Bx,x\rangle^{t/2}. \label{thm2.1_eq04-2}
\end{align}

%\begin{align*}
%& K^{1/2}\left(m^{1+t},M^{1+t},2\right)K(m,M,2)\langle A^{-1}\sharp Bx,x\rangle^t \\
%&\le K^{1/2}\left(m^{1+t},M^{1+t},2\right)K(m,M,2)K^{-t/2}\left(\sqrt{\frac{m}{M}},\sqrt{\frac{M}{m}},2\right)\langle A^{-1}x,x\rangle^{t/2}\langle Bx,x\rangle^{t/2} \\
%&=K^{1/2}\left(m^{1+t},M^{1+t},2\right)K^{1-t/2}(m,M,2)\langle A^{-1}x,x\rangle^{t/2}\langle Bx,x\rangle^{t/2}.
%\end{align*}
%This with \eqref{thm2.1_eq04-2} gives
Hence it follows from \eqref{thm2.1_eq04-2} that
\begin{align*}
\langle A\hat \spe_tBx,x\rangle &\le K\left(M^{t-1},m^{t-1},2\right) K^2(m,M,t)K^{-t}(m,M,2)\langle A^{-1}x,x\rangle^{t-1}\langle Bx,x\rangle^{t} \\
&\le K\left(M^{t-1},m^{t-1},2\right) K^2(m,M,t)K^{-t}(m,M,2)\langle Ax,x\rangle^{1-t}\langle Bx,x\rangle^{t},
\end{align*}
by the first inequality in Lemma \ref{lemma2.1} (iii).
Thus we have the second inequality in \eqref{thm2.1_eq02}.
\end{proof}

By virtue of Theorem~\ref{thm-2}, we have the following usual operator order relation between the weighted geometric mean and the weighted spectral geometric one in terms of the generalized Kantorovich constant:

\begin{corollary}
Let $A,B\in \mathcal B\left( \mathcal H \right)$  be two positive invertible operators such that $0<m\le A,B\le M$ for some scalars $0<m<M$, and let $t\in\mathbb{R}$. 
\begin{itemize}
\item[(i)] For $t \ge 1$, 
\[
\chi_+(m,M,t)^{-1} A\hat\sharp_t B \le A\hat \spe_t B \le \chi_+(m,M,t) A\hat\sharp_t B,
\]
\item[(ii)] for $t \le 0$,
\[ 
\chi_-(m,M,t)^{-1} A\hat\sharp_t B \le A\hat \spe_t B \le \chi_-(m,M,t) A\hat\sharp_t B,
\] 
\end{itemize}
where $\chi_{\pm}(m,M,t):=\min\left\{\eta_{\pm}(m,M,t),\Gamma_{\pm}(m,M,t)\right\}$, and
\begin{align*}
&\eta_{+}(m,M,t):=K^t(m,M,2)K\left(\frac{m}{M},\frac{M}{m},t\right),\\
& \eta_{-}(m,M,t):=K^{1-t}(m,M,2)K\left(\frac{m}{M},\frac{M}{m},t\right),\\
& \Gamma_{+}(m,M,t):=K\left(m^{1+t},M^{1+t},2\right)K^{2}(m,M,t)K^{t-1}(m,M,2),\\
& \Gamma_{-}(m,M,t):=K\left(m^{t-1},M^{t-1},2\right)K^{2}(m,M,t) K^{-t}(m,M,2).
\end{align*}
\end{corollary}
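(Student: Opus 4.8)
The plan is to transfer the scalar two‑sided estimates already in hand into the L\"owner order, treating $t\ge 1$ and $t\le 0$ separately. Fix a unit vector $x$ and abbreviate $P:=\langle Ax,x\rangle^{1-t}\langle Bx,x\rangle^t$. Since $0<m\le A,B\le M$ forces $\tfrac mM A\le B\le \tfrac Mm A$, Lemma~\ref{lemma2.1}(i), applied with the parameters $\tfrac mM,\tfrac Mm$ and the exponent $t\notin(0,1)$, gives
\[
P\le \langle A\hat\sharp_t Bx,x\rangle\le K\left(\tfrac mM,\tfrac Mm,t\right)P,
\]
while Theorem~\ref{thm-2} gives $K^{-t}(m,M,2)\,P\le \langle A\hat\spe_t Bx,x\rangle\le \Gamma_+(m,M,t)\,P$ for $t\ge 1$, and the same with $K^{t-1}(m,M,2)$, $\Gamma_-$ in place of $K^{-t}(m,M,2)$, $\Gamma_+$ for $t\le 0$. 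Because $X\le Y$ as soon as $\langle Xx,x\rangle\le\langle Yx,x\rangle$ for every unit $x$, I would first chain these in the two obvious ways: combining the upper bound for $\hat\spe_t$ with the lower bound $P\le\langle A\hat\sharp_t Bx,x\rangle$ yields $A\hat\spe_t B\le \Gamma_\pm(m,M,t)\,A\hat\sharp_t B$, and combining the upper bound $\langle A\hat\sharp_t Bx,x\rangle\le K(\tfrac mM,\tfrac Mm,t)P$ with $P\le K^{t}(m,M,2)\langle A\hat\spe_t Bx,x\rangle$ (resp.\ $P\le K^{1-t}(m,M,2)\langle A\hat\spe_t Bx,x\rangle$ when $t\le 0$) yields $A\hat\sharp_t B\le \eta_\pm(m,M,t)\,A\hat\spe_t B$. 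At this stage one has the unbalanced pair $\eta_\pm(m,M,t)^{-1}A\hat\sharp_t B\le A\hat\spe_t B\le \Gamma_\pm(m,M,t)\,A\hat\sharp_t B$.

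The remaining point is to upgrade this to the balanced pair with $\chi_\pm=\min\{\eta_\pm,\Gamma_\pm\}$, and here I would invoke the inversion symmetry of both means: $(A\hat\sharp_t B)^{-1}=A^{-1}\hat\sharp_t B^{-1}$, and, using $A^{-1}\sharp B=(A\sharp B^{-1})^{-1}$, also $(A\hat\spe_t B)^{-1}=A^{-1}\hat\spe_t B^{-1}$. Applying the two inequalities just obtained to $A^{-1},B^{-1}$, which satisfy $\tfrac1M\le A^{-1},B^{-1}\le\tfrac1m$, and noting that the substitution $(m,M)\mapsto(\tfrac1M,\tfrac1m)$ preserves the ratio $M/m$ and hence leaves every generalized Kantorovich constant (each of which depends only on that ratio) --- so in particular each of $\eta_\pm,\Gamma_\pm$ --- unchanged, then rewriting through the inversion identities and finally inverting both sides (which reverses the operator order), I obtain the complementary estimates $\Gamma_\pm(m,M,t)^{-1}A\hat\sharp_t B\le A\hat\spe_t B\le \eta_\pm(m,M,t)\,A\hat\sharp_t B$. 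Intersecting the two pairs of estimates gives exactly $\chi_\pm(m,M,t)^{-1}A\hat\sharp_t B\le A\hat\spe_t B\le \chi_\pm(m,M,t)\,A\hat\sharp_t B$.

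No step is deep; the only delicate part is bookkeeping the Kantorovich constants. In particular, for $t\le 0$ one has $t-1<0$, so $m^{t-1}>M^{t-1}$ and the arguments of $K(m^{t-1},M^{t-1},2)$ in $\Gamma_-$ appear in the ``wrong'' order relative to the convention of Lemma~\ref{lemma2.1} and Theorem~\ref{thm-2}; this is harmless because $K$ depends only on the ratio of its first two arguments, but it is the sort of thing one must keep straight, as is the verification that $\eta_\pm,\Gamma_\pm$ really are invariant under $m\leftrightarrow 1/M$, $M\leftrightarrow 1/m$ used in the inversion step. Everything else is the routine passage from quadratic‑form inequalities to the L\"owner order, together with the order‑reversing property of operator inversion.
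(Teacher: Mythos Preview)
Your proposal is correct and follows essentially the same approach as the paper's proof: combine Lemma~\ref{lemma2.1}(i) with the two-sided bounds of Theorem~\ref{thm-2} to obtain the unbalanced estimates $\eta_\pm^{-1}A\hat\sharp_tB\le A\hat\spe_tB\le\Gamma_\pm\,A\hat\sharp_tB$, then use the self-duality $(A\hat\sharp_tB)^{-1}=A^{-1}\hat\sharp_tB^{-1}$, $(A\hat\spe_tB)^{-1}=A^{-1}\hat\spe_tB^{-1}$ together with the invariance of the Kantorovich constants under $(m,M)\mapsto(1/M,1/m)$ to obtain the complementary pair, and finally take the minimum. The paper organizes the last step slightly differently (it first takes the minimum on the upper side and then applies duality once more to get the lower bound), but this is only a cosmetic reordering of exactly the same ingredients.
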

\begin{proof}
Note that $\dfrac{m}{M}A\le B \le \dfrac{M}{m}A$ from $m\le A,B\le M$. By the second inequality in Lemma \ref{lemma2.1} (i) with the first inequality in \eqref{thm2.1_eq01}, we have
$$
A\hat\sharp_tB\le \eta_+(m,M,t)A\hat \spe_tB.
$$
 Replacing $A$ and $B$ by $A^{-1}$ and $B^{-1}$ respectively, we have 
$$
A\hat \spe_tB\le \eta_+(m,M,t)A\hat\sharp_tB
$$
since $\eta_+\left(\frac{1}{M},\frac{1}{m},t\right)=\eta_{+}(m,M,t)$, $\left(A^{-1}\hat\sharp_tB^{-1}\right)^{-1}=A\hat\sharp_tB$ and $\left(A^{-1}\hat \spe_tB^{-1}\right)^{-1}=A\hat \spe_tB$. By the first inequality in Lemma \ref{lemma2.1} (i) with the second inequality in \eqref{thm2.1_eq01}, we have
$$
A\hat \spe_tB\le \Gamma_{+}(m,M,t) A\hat\sharp_t B.
$$
Thus we obtained the second inequality in (i). By the self-duality of two geometric means and $\chi_+(M^{-1},m^{-1},t)=\chi_+(m,M,t)$ by the property of the Kantorovich constant, we have the first inequality in (i).

By the second inequality in Lemma \ref{lemma2.1} (i) with the first inequality in \eqref{thm2.1_eq02}, we have
$$
A\hat\sharp_tB \le K\left(\frac{m}{M},\frac{M}{m},t\right)K^{1-t}(m,M,2) A\hat \spe_tB.
$$
Replacing $A$ and $B$ by $A^{-1}$ and $B^{-1}$ respectively, we have 
$$
A\hat \spe_tB \le K\left(\frac{m}{M},\frac{M}{m},t\right)K^{1-t}(m,M,2) A\hat\sharp_tB.
$$
By the first inequality in Lemma \ref{lemma2.1} (i) with the second inequality in \eqref{thm2.1_eq02}, we have
$$
A\hat \spe_tB\le \Gamma_{-}(m,M,t) A\hat\sharp_t B.
$$
Thus we obtain the inequalities in (ii), by the self-duality of two geometric means $\hat\spe_t$ and $\hat\sharp_t$, and $\chi_-(M^{-1},m^{-1},t)=\chi_-(m,M,t)$ .
\end{proof}

\begin{remark}
\begin{itemize}
It is quite natural to compare $\eta_{\pm}(m,M,t)$ and $\Gamma_{\pm}(m,M,t)$.
We firstly show numerical examples for them.
\item[(i)] For $t\ge 1$ and $0<m\le M$, we have the following numerical examples.
\begin{align*}
& \eta_+(3,4,2)\simeq 1.13075 < 1.27451 \simeq \Gamma_+(3,4,2)\\
& \eta_+(3,4,5)\simeq 2.36637 < 3.17105 \simeq \Gamma_+(3,4,5).
\end{align*}
\item[(ii)]  For $t\le 0$ and $0<m\le M$, we have the following numerical examples.
\begin{align*}
& \eta_-(3,4,-2)\simeq 1.35329 < 1.41213 \simeq \Gamma_-(3,4,-2)\\
& \eta_-(3,4,-1.5)\simeq 1.22562 < 1.26455\simeq \Gamma_-(3,4,-1.5).
\end{align*}
\end{itemize}
The above numerical computations support the relation $\eta_{\pm}(m,M,t)\le \Gamma_{\pm}(m,M,t)$.
However it is not easy to show these inequalities.
We note that
\begin{align*}
& \eta_{\pm}(m,M,t)\le \Gamma_{\pm}(m,M,t) \\
\Longleftrightarrow &K(m,M,2)K\left(M/m,m/M,t\right)\le K(m^{t\pm 1},M^{t\pm 1},2)K^2(m,M,t)\\
\Longleftrightarrow &K(x,2)K(x^2,t)\le K(x^{t\pm 1},2)K^2(x,t).
\end{align*}
It is known that $K(x^2,t)\ge K^2(x,t),\,\,(t\le 0,\,\,{\rm or}\,\,t\ge 1)$ in \cite[Corollary 3.2]{MFS2023}. In addition, it is easy to see that $K(x,2)\le K(x^{t+1},2)$ for $t \ge 1$ and  $K(x,2)\le K(x^{t-1},2)$ for $t\le 0$ since the following calculations
\begin{align*}
&\frac{dK(x^{t+1},2)}{dt}=\frac{\left(x^{1+t}-1\right)\left(x^{1+t}+1\right)\log x}{4x^{1+t}} \ge 0,\,\,(x>0,\,\,t\ge 1)\\
&\frac{dK(x^{t-1},2)}{dt}=\frac{\left(x^{t}-x\right)\left(x^{t}+x\right)\log x}{4x^{t+1}} \le 0,\,\,(x>0,\,\,t\le 0)
\end{align*}
imply $K(x^{t+1},2)\ge K(x,2)$ and $K(x^{t-1},2)\ge K(x^{-1},2)=K(x,2)$.
Thus we have to prove the following inequalities directly
\begin{align*}
&K(x,2)K(x^2,t)\le K(x^{t+ 1},2)K^2(x,t),\quad (x>0,\,\,\,t\ge 1),\\
&K(x,2)K(x^2,t)\le K(x^{t- 1},2)K^2(x,t),\quad (x>0,\,\,\,t\le 0).
\end{align*}
We have not proven these inequalities and not found any counter--examples yet.
\end{remark}

%----------------------------------------------
\section{Norm inequalities for spectral geometric mean with geometric mean}
%----------------------------------------------

Next, we show norm inequalities related to two geometric means of positive invertible operators for all $t \in {\Bbb R}$. Before we state that, we need some preliminaries. We recall the following Kantorovich inequality in \cite[Chapter 8]{FMPS1}: Let $A$ and $B$ be positive invertible operators such that $m_1\leq A \leq M_1$ and $m_2 \leq B \leq M_2$ for some scalars $0<m_1<M_1$ and $0<m_2< M_2$. Put $h_1:=\frac{M_1}{m_1}$ and $h_2:=\frac{M_2}{m_2}$. If $A\leq B$, then 
\[
A^p \leq K(h_1,p) B^p \qquad \mbox{for all $p\geq 1$}
\]
and 
\[
A^p \leq K(h_2,p) B^p \qquad \mbox{for all $p\geq 1$},
\] 
where $K(h,p)$ is the generalized Kantorovich constant given in \eqref{def_K01}.\par

\begin{theorem} \label{thm-m}
Let $A$ and $B$ be positive invertible operators such that $m\leq A, B \leq M$ for some scalars $0<m<M$, and $t \in {\Bbb R}$. Put $h:=\frac{M}{m}$. Then
\begin{enumerate}
\item[(i)] For $t \leq -\frac{1}{2}$
\[
K(h,-2t)^{-1} \NORM{A \hat \spe_t B} \leq \NORM{A \hat \s_{t} B} \leq K(h^2,1-t) K(h,2)^{1-t} \NORM{A \hat \spe_t B}.
\]
\item[(ii)] For $-\frac{1}{2}\leq t \leq 0$
\[
\NORM{A \hat \spe_t B} \leq \NORM{A \hat \s_{t} B} \leq K(h^2,1-t) K(h,2)^{1-t} \NORM{A \hat \spe_t B}.
\]
\item[(iii)] For $0\leq t \leq \frac{1}{2}$
\[
K(h,2(1-t))^{-1} K\left(h,\frac{1}{1-t}\right)^{t-1} \NORM{A \spe_t B} \leq \NORM{A \s_{t} B} \leq \NORM{A \spe_t B}.
\]
\item[(iv)] For $\frac{1}{2}\leq t \leq 1$
\[
K(h,2t)^{-1} K\left(h,\frac{1}{t}\right)^{-t} \NORM{A \spe_t B} \leq \NORM{A \s_{t} B} \leq \NORM{A \spe_t B}.
\]
\item[(v)] For $1 \leq t \leq \frac{3}{2}$
\[
\NORM{A \hat \spe_t B} \leq \NORM{A \hat\sharp_t B} \leq K(h^2,t) K(h,2)^{t} \NORM{A \hat \spe_t B}.
\]
\item[(vi)] For $\frac{3}{2} \leq t$
\[
K(h,2(t-1))^{-1} \NORM{A \hat \spe_t B} \leq \NORM{A \hat\sharp_t B} \leq K(h^2,t) K(h,2)^{t} \NORM{A \hat \spe_t B}
\]
\end{enumerate}
\end{theorem}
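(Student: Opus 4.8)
The plan is to push the argument behind Theorem~\ref{thm-2} through at the level of the operator norm, supplemented by the operator Kantorovich inequality recalled above, the norm inequality \eqref{eq:ongs}, and the operator order supplied by the Corollary to Theorem~\ref{thm-2}. A first move halves the work: for every real $t$ there are the self--dualities $A\hat\sharp_tB=B\hat\sharp_{1-t}A$ and $A\hat\spe_tB=B\hat\spe_{1-t}A$. The first is classical; for the second, writing $A^{-1}\sharp B=A^{-\frac12}(A^{\frac12}BA^{\frac12})^{\frac12}A^{-\frac12}$ gives the identity $(A^{-1}\sharp B)A(A^{-1}\sharp B)=B$, whence
\[
A\hat\spe_tB=(A^{-1}\sharp B)^tA(A^{-1}\sharp B)^t=(A^{-1}\sharp B)^{t-1}B(A^{-1}\sharp B)^{t-1}=(B^{-1}\sharp A)^{1-t}B(B^{-1}\sharp A)^{1-t}=B\hat\spe_{1-t}A,
\]
where $(A^{-1}\sharp B)^{-1}=B^{-1}\sharp A$. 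Since the hypothesis $m\le A,B\le M$ and the constant $h=\frac Mm$ are symmetric in $A,B$, replacing $(A,B,t)$ by $(B,A,1-t)$ turns (i) into (vi), (ii) into (v) and (iii) into (iv); thus it suffices to prove (iv), (v) and (vi), i.e.\ the range $t\ge\frac12$.

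The upper inequalities for $t\ge1$ I would read off from the Corollary to Theorem~\ref{thm-2}: taking the operator norm in its part (i) gives $\NORM{A\hat\sharp_tB}\le\chi_+(m,M,t)\NORM{A\hat\spe_tB}\le\eta_+(m,M,t)\NORM{A\hat\spe_tB}$, and since $K(m,M,2)=K(h,2)$ and $K\!\left(\frac mM,\frac Mm,t\right)=K(h^2,t)$ one has $\eta_+(m,M,t)=K(h^2,t)K(h,2)^t$, which is exactly the constant in (v) and (vi). For $\frac12\le t\le1$ the upper inequality of (iv) is \eqref{eq:ongs}. By self--duality these settle the right--hand inequalities in all six items.

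What remains is the lower bounds. For (iv), $\frac12\le t\le1$, I would put $X:=A\spe_tB$, use $A^{-1}\sharp X=(A^{-1}\sharp B)^t$ from \eqref{eq:sgm}, and estimate $\NORM{X}$ by the same maneuvers as in the proof of Theorem~\ref{thm-2} with quadratic forms replaced by the operator norm --- the a priori bounds $\sqrt{m/M}\le A^{-1}\sharp B\le\sqrt{M/m}$, the identity $\NORM{C^s}=\NORM{C}^s$ for $s\ge0$, the operator--norm form of Lemma~\ref{lemma2.1}(iv) (whose Kantorovich factor at the relevant bounds equals $K(h,2)$), and the operator Kantorovich inequality applied with the two exponents $2t$ and $1/t$, both $\ge1$ here --- which produce precisely the constants displayed in (iv). For (v) and (vi), $t\ge1$, the lower bound reads $\NORM{A\hat\spe_tB}\le c_t\,\NORM{A\hat\sharp_tB}$ with $c_t=1$ on $[1,\frac32]$ and $c_t=K(h,2(t-1))$ on $[\frac32,\infty)$; here I would work from $A\hat\spe_tB=(A^{-1}\sharp B)^{t-1}B(A^{-1}\sharp B)^{t-1}$ and $A\hat\sharp_tB=B^{\frac12}(B^{-\frac12}AB^{-\frac12})^{1-t}B^{\frac12}$, so that both norms have the shape $\NORM{B^{\frac12}(\cdot)B^{\frac12}}$. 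On $[1,\frac32]$ the exponent $2(t-1)\le1$ keeps one in the regime where monotonicity of the power function $x\mapsto x^{r}$ for $0\le r\le1$ already forces $c_t=1$; for $t\ge\frac32$ the exponent $2(t-1)\ge1$ triggers one more application of the operator Kantorovich inequality, now to the geometric mean $A^{-1}\sharp B$ (spread ratio $h$), which inserts $K(h,2(t-1))$. The sub--case boundaries $\pm\frac12$ and $\frac32$ are exactly the points where the exponents $\mp2t$, $2(1-t)$, $2(t-1)$ pass through $1$.

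The hard part is the bookkeeping: each invocation of Lemma~\ref{lemma2.1}(iv) and of the operator Kantorovich inequality has to be made with exactly the right ratio ($h$ or $h^2$) and exponent so that the accumulated constants telescope to the ones displayed in (iv), (v) and (vi). The one genuinely delicate step is the ``$c_t=1$'' lower bound on $[1,\frac32]$ (equivalently the left inequality of (ii)): it does not follow from the Corollary and needs a direct operator--norm comparison exploiting $2(t-1)\le1$. A minor loose end is to confirm that \eqref{eq:ongs}, stated for matrices, survives for positive invertible operators, which is a routine finite--dimensional reduction. With these in place, the six inequalities follow by combining the reduction step with the estimates above.
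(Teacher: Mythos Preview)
Your symmetry reduction via $A\hat\sharp_tB=B\hat\sharp_{1-t}A$ and $A\hat\spe_tB=B\hat\spe_{1-t}A$ is correct and is exactly how the paper passes between (i)--(ii) and (v)--(vi), and between (iii) and (iv). Reading off the upper bounds in (v) and (vi) from the operator--order Corollary to Theorem~\ref{thm-2} (taking norms in $A\hat\sharp_tB\le\chi_+(m,M,t)\,A\hat\spe_tB\le\eta_+(m,M,t)\,A\hat\spe_tB$) is legitimate and in fact shorter than the paper, which re-derives those constants by assuming $A\hat\spe_tB\le I$ and pushing through two Kantorovich steps. The upper bound in (iii)/(iv) via \eqref{eq:ongs} is also what the paper relies on.

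The genuine gap is in the \emph{lower} bounds, where your sketches do not work. For (v) you assert that ``monotonicity of $x\mapsto x^r$ for $0\le r\le1$ already forces $c_t=1$'', but writing both means as $\NORM{B^{1/2}(\cdot)B^{1/2}}$ does not help: there is no operator inequality between $(B^{-1}\sharp A)^{2(1-t)}$ and $(B^{-1/2}AB^{-1/2})^{1-t}$ in general, so one cannot simply compare the inner factors. The paper's mechanism is \emph{normalization plus implication}: assume $A\hat\sharp_tB\le I$, deduce $B\le A^{1-1/t}$ by L\"owner--Heinz, hence $B^{-1}\sharp A\ge A^{1/(2t)}$; now the crucial point is that $2(1-t)\in[-1,0]$ on $[1,\tfrac32]$, so the power is operator \emph{anti}--monotone and one gets $(B^{-1}\sharp A)^{2(1-t)}\le A^{1/t-1}$, whence $B^{1/2}(B^{-1}\sharp A)^{2(1-t)}B^{1/2}\le B^{1/2}A^{1/t-1}B^{1/2}\le I$. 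Homogeneity of both means then converts the implication into the norm inequality. For (vi) the exponent $2(t-1)\ge1$ is no longer operator monotone, and exactly one application of the operator Kantorovich inequality (on $B\sharp A^{-1}$, spread ratio $h$) inserts the factor $K(h,2(t-1))$.

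Your plan for (iv) has the same defect. Starting from $A^{-1}\sharp X=(A^{-1}\sharp B)^t$ and ``replacing quadratic forms by the operator norm'' would at best relate $\NORM{X}$ to $\NORM{A}$ and $\NORM{B}$, not to $\NORM{A\sharp_tB}$; and Theorem~\ref{thm-2} concerns only $t\notin(0,1)$, so its manipulations are not available here. The paper instead normalizes by $A\sharp_tB\le I$, applies Kantorovich once at exponent $\tfrac{1}{1-t}$ (to $B^{-1}$, ratio $h$) to obtain $A\le K(h,\tfrac{1}{1-t})B^{t/(t-1)}$, uses monotonicity of $\sharp$ to bound $B^{-1}\sharp A$, and applies Kantorovich a second time at exponent $2(1-t)$ (to $B^{-1}\sharp A$, again ratio $h$) to reach $B^{1/2}(B^{-1}\sharp A)^{2(1-t)}B^{1/2}\le K(h,2(1-t))K(h,\tfrac{1}{1-t})^{1-t}$. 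Your exponents $2t$, $1/t$ are the right ones after the $t\mapsto1-t$ swap, but the engine producing them is this implication--plus--homogeneity argument, not a direct norm bound.
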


\begin{proof}
(iii): Suppose that $0\leq t \leq \frac{1}{2}$ and $A \s_{t} B = B \s_{1-t} A \leq I$.  Since
\[
(B^{-1/2}AB^{-1/2})^{1-t} \leq B^{-1}
\]
and the generalized condition number of $B^{-1}$ is $h=\frac{M}{m}$, it follows from $\frac{1}{1-t} \geq 1$ and the Kantorovich inequality  that
\[
B^{-1/2}AB^{-1/2} \leq K\left(h,\frac{1}{1-t}\right) B^{-\frac{1}{1-t}}
\]
and $A\leq K\left(h,\frac{1}{1-t}\right) B^{\frac{t}{t-1}}$. Thus, by the monotonicity of the operator mean, we have
\[
B^{-1} \s A \leq K\left(h,\frac{1}{1-t}\right)^{1/2} B^{-1} \s B^{\frac{t}{t-1}}  = K\left(h,\frac{1}{1-t}\right)^{1/2} B^{\frac{1}{2(t-1)}}.
\]
Since $1\leq 2(1-t) \leq 2$ and $\sqrt{\dfrac{m}{M}} \le B^{-1} \s A\le \sqrt{\dfrac{M}{m}}$ which implies $h:=\dfrac{\sqrt{M/m}}{\sqrt{m/M}}=\dfrac{M}{m}$, it follows from  Kantorovich inequality that
\[
(B^{-1} \s A)^{2(1-t)} \leq K(h, 2(1-t)) K\left(h,\frac{1}{1-t}\right)^{1-t} B^{-1}
\]
and so
\[
B^{1/2} (B^{-1} \s A)^{2(1-t)} B^{1/2} \leq K(h, 2(1-t)) K\left(h,\frac{1}{1-t}\right)^{1-t}.
\]
Hence we have the following implication:
\[
A \s_{t} B \leq I \quad \Longrightarrow \quad  K\left(h,2(1-t)\right)^{-1}K\left(h,\frac{1}{1-t}\right)^{t-1}B^{1/2}\left(B^{-1}\s A\right)^{2(1-t)}B^{1/2}\le I
\]
for all $t \in [0,1/2]$. Since $A \spe_{t} B$ and $A \s_{t} B$ have the same order of homogeneity for $A,B$, we have
\[
K\left(h,2(1-t)\right)^{-1}K\left(h,\frac{1}{1-t}\right)^{t-1}\NORM{B^{1/2}\left(B^{-1}\s A\right)^{2(1-t)}B^{1/2}}\le \NORM{A\s_t B},
%\NORM{A \spe_{t} B} \leq K(h, 2(1-t)) K\left(h,\frac{1}{1-t}\right)^{1-t} \NORM{A \s_{t} B} \qquad \mbox{for all $t \in [0,1/2]$}.
\]
and equivalently,
$$
\NORM{A \spe_t B} \le K\left(h,2(1-t)\right)K\left(h,\frac{1}{1-t}\right)^{1-t}\NORM{A \s_t B}.
$$
(iv): Suppose that $\frac{1}{2}\leq t \leq 1$. Since $0\leq 1-t \leq \frac{1}{2}$, it follows from (iii) that
\[
K(h,2t)^{-1} K\left(h,\frac{1}{t}\right)^{-t} \NORM{B \spe_{1-t} A} \leq \NORM{B \s_{1-t} A}
\]
and so
\[
K(h,2t)^{-1} K\left(h,\frac{1}{t}\right)^{-t} \NORM{A \spe_{t} B} \leq \NORM{A \s_{t} B} \leq \NORM{A \spe_{t} B} \qquad \mbox{for all $t \in [1/2,1]$}.
\]
(v): Suppose that $1\leq t \leq \frac{3}{2}$ and $A \hat\s_{t} B \leq I$. Since $(A^{-1/2}BA^{-1/2})^{t} \leq A^{-1}$ and $0<\frac{1}{t}\leq 1$, we have $A^{-1/2}BA^{-1/2}\leq A^{-\frac{1}{t}}$ and so $B\leq A^{1-\frac{1}{t}}$. Hence we have $B^{-1} \s A \geq A^{\frac{1}{2t}}$. Since $-1\leq 2(1-t)\leq 0$, it follows that
\[
(B^{-1} \s A)^{2(1-t)} \leq  A^{\frac{1}{t}-1}
\]
and so
\[
B^{1/2} (B^{-1} \s A)^{2(1-t)} B^{1/2} \leq B^{1/2}A^{\frac{1}{t}-1}B^{1/2}\leq I.
\]
Hence we have
\[
\NORM{A \hat \spe_{t} B}= \NORM{B \hat \spe_{1-t} A} \leq \NORM{A \hat\s_{t} B} \qquad \mbox{for all $1\leq t \leq \frac{3}{2}$}.
\]
Next, suppose that $t\geq 1$ and $A \hat\spe_{t} B \leq I$. Since $A^{1/2}(A^{-1} \s B)^{2t} A^{1/2} \leq I$ and so $(A^{-1} \s B)^{2t} \leq A^{-1}$, it follows from $0<\frac{1}{2t}\leq 1$ that $A^{-1} \s B \leq A^{-\frac{1}{2t}}$ and so
\[
(A^{1/2}BA^{1/2})^{1/2} \leq A^{1-\frac{1}{2t}}.
\]
Taking the square of both sides, it follows from $m\leq (A^{1/2}BA^{1/2})^{1/2} \leq M$ that
\[
A^{1/2}BA^{1/2} \leq K(h, 2) A^{2-\frac{1}{t}}
\]
and so $A^{-1/2} B A^{-1/2}\leq K(h,2) A^{-\frac{1}{t}}$. Taking the $t$-power of both sides, it follows from $m/M \leq A^{-1/2} B A^{-1/2} \leq M/m$ that 
\[
(A^{-1/2}BA^{-1/2})^{t} \leq K(h^2,t) K(h,2)^{t} A^{-1}
\]
and so $A \hat\s_{t} B \leq K(h^2,t) K(h,2)^{t}$. Hence we have
\[
\NORM{A \hat\s_{t} B} \leq K(h^2,t) K(h,2)^{t} \NORM{A \hat \spe_t B}
\]
for all $t\geq 1$, and we have (v) and the second inequality of (vi).\\
(vi): Suppose that $\frac{3}{2} \leq t$. If $A\hat \s_{t} B \leq I$, then it follows from L\"{o}wner-Heinz inequality that $B\leq A^{1-\frac{1}{t}}$. Since $1\leq 2(t-1)$, it follows from $B \s A^{-1}\leq A^{-\frac{1}{2t}}$ that 
\begin{align*}
B^{\frac{1}{2}}(B^{-1} \s A)^{2(1-t)} B^{\frac{1}{2}} & = B^{\frac{1}{2}}(B \s A^{-1})^{2(t-1)} B^{\frac{1}{2}} \\
& \leq B^{\frac{1}{2}} K(h,2(t-1))A^{\frac{1}{t}-1}B^{\frac{1}{2}} \\
& \leq K(h,2(t-1)).
\end{align*}
Since $A\hat{\spe_t}B =B\hat{\spe_{1-t}}A$ (see \cite[Lemma 1]{K2021} for example),  we have 
\begin{align*}
\NORM{A \hat \spe_{t} B} & = \NORM{B\hat{\spe_{1-t}}A} \\
& = \NORM{(B^{-1} \s A)^{1-t} B (B^{-1} \s A)^{1-t}} \\
& = \NORM{B^{\frac{1}{2}}(B^{-1} \s A)^{2(1-t)} B^{\frac{1}{2}}}\\
& \leq K(h,2(t-1)) \NORM{A \hat \s_{t} B}
\end{align*}
and we have (vi).\par
(i): Suppose that $t \leq -\frac{1}{2}$. Since $1-t \geq \frac{3}{2}$, it follows from (vi) that
\[
K(h,-2t)^{-1}  \NORM{B \hat \spe_{1-t} A} \leq \NORM{B \hat \s_{1-t} A} \leq K(h^2,1-t) K(h,2)^{1-t} \NORM{B \hat \spe_{1-t} A}
\]
and so
\[
K(h,-2t)^{-1}  \NORM{A \hat \spe_{t} B} \leq \NORM{A \hat \s_{t} B} \leq K(h^2,1-t) K(h,2)^{1-t} \NORM{A \hat \spe_{t} B} 
\]
for all $t \in [1/2,1]$.\par
(ii): Suppose that $-\frac{1}{2}\leq t\leq 0$. Since $1\leq 1-t \leq \frac{3}{2}$, it follows from (v) that
\[
 \NORM{B \hat \spe_{1-t} A} \leq \NORM{B \hat \s_{1-t} A} \leq K(h^2,1-t) K(h,2)^{1-t} \NORM{B \hat \spe_{1-t} A}
\]
and so
\[
\NORM{A \hat \spe_{t} B} \leq \NORM{A \hat \s_{t} B} \leq K(h^2,1-t) K(h,2)^{1-t} \NORM{A \hat \spe_{t} B} 
\]
for all $t \in [-\frac{1}{2}, 0]$.\par
Hence the proof of Theorem~\ref{thm-m} is complete.
\end{proof}

\begin{remark}
If $t=0$ in (iii), then $K(h,2(1-t))^{-1} K\left(h,\frac{1}{1-t}\right)^{t-1}=K(h,2)^{-1} \not= 1$ for $h\neq 1\Leftrightarrow m\neq M$. However, if $t=\frac{1}{2}$ in (iii), then we have 
\begin{equation}\label{remark3.2_eq01}
\NORM{A \spe B}\leq K(h,2)^{\frac{1}{2}}\NORM{A \s B}.
\end{equation}
On the other hand, by Proposition~\ref{previous_proposition} we have
\begin{equation}\label{remark3.2_eq02}
\NORM{A \spe B}\leq \frac{K(h,2)}{K^2(h,\frac{1}{2})} \NORM{A \s B}.
\end{equation}
Since we have $K^2\left(h,\frac12\right) \leq K^{1/2}(h,2)$ for $h>0$ by elementary calculations, the inequality \eqref{remark3.2_eq01} is better than the inequality \eqref{remark3.2_eq02}. Indeed, 
$$K^2\left(h,\frac12\right) \leq K^{1/2}(h,2)\Longleftrightarrow \left(\frac{h+1}{2\sqrt{h}}\right)^2\left(\frac{\sqrt{h}+1}{2h^{1/4}}\right)^4\ge 1
$$
which is true for $h>0$.
\end{remark}
\par
\medskip

Now, we want to refine the obtained norm inequality. For example, by \eqref{eq:ongs}, the norm inequality $ 
\NORM{A\s_{t}B} \leq \NORM{A{{\spe}_{t}}B}$ holds for all $t\in [0,1]$. 
On the other hand, it is known in \cite[Theorem 1]{NS} and \cite[Theorem 4]{S2007} that for all $t\in [0,1]$
\begin{equation*} %\label{eq:se22}
\NORM{A\s_{t}B} \leq \NORM{e^{(1-t)\log A + t \log B}} \leq \NORM{A^{\frac{1-t}{2}}B^t A^{\frac{1-t}{2}}} \leq \NORM{A^{1-t} B^t}.
\end{equation*}
We slightly refine the norm inequality \eqref{eq:ongs} in the following. We omit its proof which is the same as that of \cite[Theorem 3.6]{GT2022}.
\begin{theorem}
Let $A,B\in \mathcal B\left( \mathcal H \right)$  be two positive invertible operators. Then for each $t\in [0,1]$
\begin{equation*} %\label{eq:se2s}
\NORM{A\s_{t}B} \leq \NORM{e^{(1-t)\log A + t \log B}} \leq \NORM{A^{\frac{1-t}{2}}B^t A^{\frac{1-t}{2}}} \leq \NORM{A{{\spe}_{t}}B}.
\end{equation*}
\end{theorem}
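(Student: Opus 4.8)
The first two inequalities carry no new content here: $\NORM{A\s_{t}B}\le\NORM{e^{(1-t)\log A+t\log B}}\le\NORM{A^{\frac{1-t}{2}}B^tA^{\frac{1-t}{2}}}$ is exactly \cite[Theorem 1]{NS} combined with \cite[Theorem 4]{S2007}. So the whole point is the last inequality
\[
\NORM{A^{\frac{1-t}{2}}B^tA^{\frac{1-t}{2}}}\le\NORM{A\spe_t B}\qquad(t\in[0,1]),
\]
and the plan is to run the argument of \cite[Theorem 3.6]{GT2022}, that is, the same homogeneity device already used in the proof of Theorem~\ref{thm-m}. Both $A^{\frac{1-t}{2}}B^tA^{\frac{1-t}{2}}$ and $A\spe_t B$ are multiplied by $\lambda$ when $(A,B)$ is replaced by $(\lambda A,\lambda B)$ (for the spectral geometric mean this uses $(\lambda^{-1}A^{-1})\s(\lambda B)=A^{-1}\s B$). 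Hence, after dividing $A$ and $B$ by $\NORM{A\spe_t B}$, it suffices to prove the implication
\[
A\spe_t B\le I\qquad\Longrightarrow\qquad A^{\frac{1-t}{2}}B^tA^{\frac{1-t}{2}}\le I .
\]

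To prove this implication I would use both standard presentations of the spectral geometric mean simultaneously. Put $G:=B^{-1}\s A$, so that $G^{-1}=A^{-1}\s B$. The identity $A\spe_t B=(A^{-1}\s B)^tA(A^{-1}\s B)^t$ and the self-duality $A\spe_t B=B\spe_{1-t}A=(B^{-1}\s A)^{1-t}B(B^{-1}\s A)^{1-t}$ (see \cite[Lemma 1]{K2021}) show, upon conjugating by $(A^{-1}\s B)^{-t}$ and by $(B^{-1}\s A)^{-(1-t)}$ respectively, that $A\spe_t B\le I$ is equivalent to \emph{each} of the two inequalities
\[
A\le G^{2t}\qquad\text{and}\qquad B\le G^{2(t-1)} .
\]
Next I would apply the L\"{o}wner-Heinz inequality: raising the first to the power $1-t\in[0,1]$ gives $A^{1-t}\le G^{2t(1-t)}$, and raising the second to the power $t\in[0,1]$ gives $B^t\le G^{2t(t-1)}$. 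Taking inverses in $A^{1-t}\le G^{2t(1-t)}$ reverses the order, so $G^{2t(t-1)}=G^{-2t(1-t)}\le A^{-(1-t)}=A^{t-1}$, and therefore $B^t\le G^{2t(t-1)}\le A^{t-1}$. Conjugating by $A^{\frac{1-t}{2}}$ then gives $A^{\frac{1-t}{2}}B^tA^{\frac{1-t}{2}}\le A^{\frac{1-t}{2}}A^{t-1}A^{\frac{1-t}{2}}=I$, which is the desired implication, and the theorem follows.

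The step that needs the most care is the choice of exponents in the two applications of L\"{o}wner-Heinz. The tempting shortcut from $A\le G^{2t}$, namely taking the $\frac{1}{2t}$-th power to obtain $A^{1/(2t)}\le G$, is not valid for $t<\frac12$, and the symmetric shortcut from $B\le G^{2(t-1)}$ fails for $t>\frac12$; carrying both equivalent forms of the hypothesis and raising only to the admissible powers $t$ and $1-t$ in $[0,1]$ (together with one order-reversing inversion) is exactly what makes the argument uniform over $t\in[0,1]$. Apart from this, nothing deep is involved: the homogeneity reduction is the same as in the proof of Theorem~\ref{thm-m}, while the identity $A\spe_t B=(A^{-1}\s B)^tA(A^{-1}\s B)^t$ and the self-duality $A\spe_t B=B\spe_{1-t}A$ are already recorded above.
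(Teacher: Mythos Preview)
Your proof is correct and follows essentially the same approach as the paper, which simply omits the argument and refers to \cite[Theorem 3.6]{GT2022}; you have written out the homogeneity reduction and the implication $A\spe_t B\le I\Rightarrow A^{\frac{1-t}{2}}B^tA^{\frac{1-t}{2}}\le I$ explicitly, exactly in the spirit of that reference. The use of both forms $A\le G^{2t}$ and $B\le G^{2(t-1)}$ together with L\"{o}wner--Heinz at the exponents $1-t$ and $t$ (plus one inversion) is the right way to make the argument uniform in $t\in[0,1]$.
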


%\begin{proof}
%If $A{{\spe}_{t}}B \leq I$ for some $t\in [0,1]$, then $C^tAC^t\leq I$ where $C=A^{-1}\s B$, and so $A\leq C^{-2t}$. Also, it follows from the Riccati equation that $B=CAC \leq C^{2-2t}$. Since $t\in [0,1]$, we have $B^t \leq C^{t(2-2t)}$ and
%\[
%A^{\frac{1-t}{2}} B^t A^{\frac{1-t}{2}} \leq A^{\frac{1-t}{2}} C^{t(2-2t)} A^{\frac{1-t}{2}}.
%\]
%Hence, it follows that
%\begin{align*}
%\NORM{A^{\frac{1-t}{2}} B^t A^{\frac{1-t}{2}}} & \leq \NORM{A^{\frac{1-t}{2}} C^{t(2-2t)} A^{\frac{1-t}{2}}} \\
%& = \NORM{C^{t(1-t)} A^{1-t}C^{t(1-t)}} \\
%& \leq \NORM{C^{t(1-t)} C^{-2t(1-t)}C^{t(1-t)}}=1
%\end{align*} 
%and so $\NORM{A^{\frac{1-t}{2}}B^t A^{\frac{1-t}{2}}} \leq \NORM{A{{\spe}_{t}}B}$.
%\end{proof}

Next, we consider the case of $t\not\in [0,1]$:

\begin{theorem}\label{norm_main}
Let $A,B\in \mathcal B\left( \mathcal H \right)$  be two positive invertible operators. Then for each $t\in [-\frac{1}{2},0] \cup [1,\frac{3}{2}]$
\begin{equation} \label{eq:s2s}
\NORM{A \hat \spe_t B} \leq \NORM{A^{\frac{1-t}{2}} B^t A^{\frac{1-t}{2}}} \leq \NORM{A \hat\s_t B}.
\end{equation}
\end{theorem}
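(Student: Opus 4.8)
The plan is to reduce \eqref{eq:s2s} to the single range $t\in[1,\tfrac32]$ and then, by homogeneity, to two operator inequalities of the form ``$\,\cdot\le I$'': one will already be contained in the proof of Theorem~\ref{thm-m}(v), and the other will be the genuinely new ingredient. First, since $A\hat\spe_tB=B\hat\spe_{1-t}A$, $A\hat\s_tB=B\hat\s_{1-t}A$, and $\NORM{A^{\frac{1-t}{2}}B^tA^{\frac{1-t}{2}}}=\NORM{B^{t/2}A^{1-t}B^{t/2}}$ (using $\NORM{XX^{*}}=\NORM{X^{*}X}$), the assertion for $t\in[-\tfrac12,0]$ follows from that for $1-t\in[1,\tfrac32]$ by interchanging $A$ and $B$, so I fix $t\in[1,\tfrac32]$. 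Under $(A,B)\mapsto(\lambda A,\mu B)$ each of $A\hat\spe_tB$, $A^{\frac{1-t}{2}}B^tA^{\frac{1-t}{2}}$, $A\hat\s_tB$ is multiplied by the same positive factor $\lambda^{1-t}\mu^{t}$; normalizing, \eqref{eq:s2s} becomes equivalent to the pair of implications $B^{t}\le A^{t-1}\Rightarrow A\hat\spe_tB\le I$ and $A\hat\s_tB\le I\Rightarrow B^{t}\le A^{t-1}$ (note that ``$A^{\frac{1-t}{2}}B^tA^{\frac{1-t}{2}}\le I$'' is the same as ``$B^{t}\le A^{t-1}$'', both operators being positive).

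For the first implication, L\"owner--Heinz (exponent $\tfrac1t\in(0,1]$) turns $B^{t}\le A^{t-1}$ into $B\le A^{1-\frac1t}$, which is precisely the intermediate estimate that the proof of Theorem~\ref{thm-m}(v) extracts from $A\hat\s_tB\le I$. From there the argument of Theorem~\ref{thm-m}(v) applies verbatim: $B^{-1}\ge A^{\frac1t-1}$, so $B^{-1}\s A\ge A^{\frac1{2t}}$ by monotonicity of the geometric mean, so $(B^{-1}\s A)^{2(1-t)}\le A^{\frac1t-1}$ because $2(1-t)\in[-1,0]$ and $x\mapsto x^{2(1-t)}$ is operator monotone decreasing, and finally $B^{1/2}(B^{-1}\s A)^{2(1-t)}B^{1/2}\le B^{1/2}A^{\frac1t-1}B^{1/2}\le I$; since $A\hat\spe_tB=B\hat\spe_{1-t}A$ has norm $\NORM{B^{1/2}(B^{-1}\s A)^{2(1-t)}B^{1/2}}$, we get $A\hat\spe_tB\le I$. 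So this implication needs nothing new beyond Theorem~\ref{thm-m}(v).

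The second implication is the crux. Write $C:=A^{-1/2}BA^{-1/2}$, so $A\hat\s_tB=A^{1/2}C^{t}A^{1/2}$ and $B=A^{1/2}CA^{1/2}$; the hypothesis reads $C^{t}\le A^{-1}$ and the target reads $(A^{1/2}CA^{1/2})^{t}\le A^{t-1}$. The L\"owner--Heinz consequence $C\le A^{-1/t}$ (i.e.\ $B\le A^{1-\frac1t}$) is \emph{not} enough here, since $x\mapsto x^{t}$ is not operator monotone for $t>1$; the sandwiched hypothesis $C^{t}\le A^{-1}$ must be used in full. For $t=2$ this is elementary --- $C^{2}\le A^{-1}$ gives $\NORM{CA^{1/2}}^{2}=\NORM{A^{1/2}C^{2}A^{1/2}}\le1$, hence $CAC=(CA^{1/2})(CA^{1/2})^{*}\le I$, hence $(A^{1/2}CA^{1/2})^{2}=A^{1/2}(CAC)A^{1/2}\le A$ --- and for $t\in[1,\tfrac32]$ I would obtain the same conclusion from the Furuta inequality (equivalently an Ando--Hiai argument), the standard tool for passing from $C^{t}\le A^{-1}$, i.e.\ $A^{1/t}\le C^{-1}$, to estimates for $(A^{1/2}CA^{1/2})^{t}$ beyond the reach of L\"owner--Heinz: apply Furuta to $A^{1/t}\le C^{-1}$ with exponents tuned to produce the outer factor $A^{1/2}$ and the inner power $C$, then reassemble. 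I expect this last step --- choosing those exponents and checking the admissibility condition $(1+r)q\ge p+r$ throughout the range --- to be the main obstacle, and it is plausible that the cutoff $t\le\tfrac32$ (rather than $t\le2$) is dictated by it together with the requirement $2(1-t)\in[-1,0]$ from the first implication.
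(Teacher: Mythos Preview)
Your reduction by symmetry and your proof of the first inequality are correct and essentially the same as the paper's: the paper also establishes the implication ``$A^{\frac{1-t}{2}}B^{t}A^{\frac{1-t}{2}}\le I\Rightarrow A\hat{\spe}_tB\le I$'' exactly via the chain $B\le A^{1-\frac1t}\Rightarrow B^{-1}\s A\ge A^{\frac{1}{2t}}\Rightarrow (B^{-1}\s A)^{2(1-t)}\le A^{\frac1t-1}$ from Theorem~\ref{thm-m}(v). The only cosmetic difference is that the paper carries this out on $[-\tfrac12,0]$ (pointing to the identical computation in Theorem~\ref{theorem3.1}) and then transfers to $[1,\tfrac32]$, whereas you do the opposite.

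The genuine gap is the second inequality. You correctly isolate the needed implication $A\hat\s_tB\le I\Rightarrow B^{t}\le A^{t-1}$, observe that L\"owner--Heinz alone is insufficient, and propose Furuta, but you do not carry out the exponent bookkeeping; you even flag it as ``the main obstacle''. In the paper this is \emph{not} an obstacle at all: the norm inequality $\NORM{A^{\frac{1-t}{2}}B^{t}A^{\frac{1-t}{2}}}\le\NORM{A\hat\s_tB}$ is simply quoted from \cite[Theorem~2.1]{BLP2005} for $t\in[-1,0]$ (equivalently, Proposition~\ref{prop_known_fact}(ii) and (iv) give the log--majorization on $[1,2]$ and $[-1,0]$). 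So what you call ``the crux'' is a known result requiring no new argument here. Your closing speculation is also off: the restriction $t\le\tfrac32$ is forced \emph{only} by the first implication (the requirement $2(1-t)\in[-1,0]$ so that $x\mapsto x^{2(1-t)}$ is operator antitone); the second inequality is valid on the full range $[1,2]$.
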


\begin{proof}
Let $-\frac{1}{2}\leq t\leq 0$. By a similar way in the proof of Theorem~\ref{theorem3.1}, it follows that for two positive invertible operators $A$ and $B$
\[
A^{\frac{1-t}{2}} B^t A^{\frac{1-t}{2}} \leq I \quad \mbox{implies} \quad A \hat \spe_t B\leq I
\]
and so $\NORM{A \hat s_t B} \leq \NORM{A^{\frac{1-t}{2}} B^t A^{\frac{1-t}{2}}}$ for $t\in [-\frac{1}{2},0]$. Also, it follows from \cite[Theorem 2.1]{BLP2005} that
\[
\NORM{A^{\frac{1-t}{2}} B^t A^{\frac{1-t}{2}}} \leq \NORM{A \hat\s_t B} \quad \mbox{for $t\in [-1,0]$}.
\]
Hence we have the norm inequality \eqref{eq:s2s} for $t\in [-\frac{1}{2},0]$.\par
Let $1 \leq t \leq \frac{3}{2}$. Since $-\frac{1}{2}\leq 1-t \leq 0$, it follows from the discuss above that the norm inequality \eqref{eq:s2s} holds for $1 \leq t \leq \frac{3}{2}$.
\end{proof}

\section{Log--majorizations for spectral geometric mean and their applications}
In this section and beyond, we will deal with matrices rather than operators.
The study on quantification for quantum coherence has been initiated in \cite{BCP2014} and 
the Tsallis relative entropy and the Tsallis relative operator entropy have been considered to be possible measures in \cite{GJBBHF2020, R2016, ZY2018}. It may be interesting to study the relations among the Tsallis relative entropy, the Tsallis relative operator entropy  and the Tsallis relative operator entropy  due to the spectral geometric mean in this section. See the paper \cite{KL} which firstly studied the Tsallis relative entropy with spectral geometric mean.

From \cite[Corollary 2.4]{AH1994} and \cite[Theorem 3.6]{GT2022}, we have
$$
\tr\,A\sharp_t B\le \tr\, \exp\left((1-t)\log A+t \log B\right) \le \tr\,A^{1-t}B^t\le \tr\, A\spe_tB,\,\,\,(0\le t\le 1)
$$
for positive definite matrices$A$ and $B$. In this section and later, we treat the {\it matrix} case
Thus we have
\begin{equation}\label{sec3_eq01}
-\tr\,T^{\spe}_t(A|B)\le D_t(A|B) \le -\tr\, T_t(A|B),\,\,\,(0<t\le 1)
\end{equation}
where 
$D_{t}(A|B):=\tr\,\left[\dfrac{A-A^{1-t}B^t}{t}\right]$ is the Tsallis relative entropy
and 
$$T_t(A|B):=\frac{A\hat \sharp_t B-A}{t}=A^{1/2}\ln_t \left(A^{-1/2}BA^{-1/2}\right)A^{1/2},\,\,(t\neq 0)$$
is the Tsallis relative operator entropy with the $t$--logarithmic function defined by $\ln_t x:=\dfrac{x^t-1}{t}$ for $x>0$ and $t \neq 0$.  See \cite{FYK2004,S2019,FS2021} for example. In addition, we define the Tsallis relative operator entropy due to the spectral geometric mean by 
$$T^{\spe}_t(A|B):=\frac{A\hat\spe_tB-A}{t},\,\,(t\neq 0).$$
Previously many results on $D_t(A|B)$ and $T_t(A|B)$ have been studied. See \cite[Chapter 7]{FM2020book} and references therein. It may be of interest to study the mathematical properties on $T^{\spe}_t(A|B)$.

For two positive semidefinite matrices $X,Y$, the $\log$--majorization $X \prec_{\log} Y$ means that
$$\prod\limits_{j=1}^k\lambda_j(X)\le \prod\limits_{j=1}^k\lambda_j(Y),\,\,(k=1,\cdots,n-1)$$ and $\det X=\det Y$, where $\lambda_1(X)\ge \cdots\ge \lambda_n(X)$ are the eigenvalues of $X$ in decreasing order. 

It is known the  following relations between $A\hat\sharp_rB$ and $A^{\frac{1-r}{2}}B^{r}A^{\frac{1-r}{2}}$. See \cite[Theorem 1.1]{H2019} for (i)--(iii), \cite[Theorem 2.1]{BLP2005} for (iv) and \cite[Theorem 3.1]{MF2009} for (v), for example. 
\begin{proposition}\label{prop_known_fact}
Let $r\in\mathbb{R}$.  For positive definite matrices $A$ and $B$, 
\begin{itemize}
\item[(i)] For $0 \le r \le 1$, $A\sharp_r B \prec_{\log} A^{\frac{1-r}{2}}B^{r}A^{\frac{1-r}{2}}$.
\item[(ii)] For $1\le r \le 2$, $A^{\frac{1-r}{2}}B^{r}A^{\frac{1-r}{2}}\prec_{\log} A\hat\sharp_r B$.
\item[(iii)] For $r \ge 2$, $A\hat\sharp_r B \prec_{\log} A^{\frac{1-r}{2}}B^{r}A^{\frac{1-r}{2}}$.
\item[(iv)] For $-1\le r \le 0$, $A^{\frac{1-r}{2}}B^{r}A^{\frac{1-r}{2}}\prec_{\log} A\hat\sharp_r B$.
\item[(v)] For $r \le -1$, $A\hat\sharp_r B \prec_{\log} A^{\frac{1-r}{2}}B^{r}A^{\frac{1-r}{2}}.$
\end{itemize}
\end{proposition}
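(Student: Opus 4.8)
The plan is to deduce all five relations from operator‑norm inequalities between the two families by passing to antisymmetric tensor powers, and then to cut the five cases down to three using the transposition symmetry of the (weighted) geometric mean. First note that the determinant clause of $\prec_{\log}$ is automatic: for every real $r$,
\[
\det\bigl(A\hat\sharp_r B\bigr)=\det A\cdot\bigl(\det A^{-1}\det B\bigr)^{r}=(\det A)^{1-r}(\det B)^{r}=\det\bigl(A^{\frac{1-r}{2}}B^{r}A^{\frac{1-r}{2}}\bigr),
\]
so only the products of the $k$ largest eigenvalues, $k<n$, have to be compared. For a positive semidefinite $Z$ one has $\prod_{j=1}^{k}\lambda_j(Z)=\NORM{\wedge^{k}Z}$, where $\wedge^{k}$ is the $k$-th antisymmetric tensor power; since $\wedge^{k}$ is multiplicative and commutes with taking real powers of positive definite matrices, $\wedge^{k}(A\hat\sharp_r B)=(\wedge^{k}A)\hat\sharp_r(\wedge^{k}B)$ and $\wedge^{k}(A^{\frac{1-r}{2}}B^{r}A^{\frac{1-r}{2}})=(\wedge^{k}A)^{\frac{1-r}{2}}(\wedge^{k}B)^{r}(\wedge^{k}A)^{\frac{1-r}{2}}$. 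Hence each of (i)--(v) is \emph{equivalent} to the corresponding operator‑norm inequality holding for all positive definite matrices of every size; e.g. (i) $\Longleftrightarrow$ $\NORM{A\sharp_r B}\le\NORM{A^{\frac{1-r}{2}}B^{r}A^{\frac{1-r}{2}}}$ for $r\in[0,1]$, (iv) $\Longleftrightarrow$ $\NORM{A^{\frac{1-r}{2}}B^{r}A^{\frac{1-r}{2}}}\le\NORM{A\hat\sharp_r B}$ for $r\in[-1,0]$, and (v) $\Longleftrightarrow$ $\NORM{A\hat\sharp_r B}\le\NORM{A^{\frac{1-r}{2}}B^{r}A^{\frac{1-r}{2}}}$ for $r\le-1$.

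Next I would halve the list. Using the transposition identity $A\hat\sharp_r B=B\hat\sharp_{1-r}A$, valid for every real $r$ (see \cite{K2021}), together with the fact that $A^{\frac{1-r}{2}}B^{r}A^{\frac{1-r}{2}}$ and $B^{\frac{r}{2}}A^{1-r}B^{\frac{r}{2}}$ are positive semidefinite with the same spectrum (both are similar to $A^{1-r}B^{r}$), the substitution $r\mapsto1-r$, $A\leftrightarrow B$ carries statement (ii) (for $r\in[1,2]$) onto statement (iv) and statement (iii) (for $r\ge2$) onto statement (v). It therefore remains to prove (i), (iv) and (v), that is, the three norm inequalities displayed above.

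For these norm inequalities I would use the normalization device already employed in the proof of Theorem~\ref{thm-m}: by homogeneity one may assume the right‑hand factor is $\le I$ and must deduce that the left‑hand factor is $\le I$; one rewrites both means in terms of operator powers and applies the L\"owner--Heinz inequality, replacing it by the Araki--Lieb--Thirring inequality (equivalently, Furuta's inequality) whenever an exponent of modulus larger than $1$ occurs. For $r\in[0,1]$ the inequality $\NORM{A\sharp_r B}\le\NORM{A^{\frac{1-r}{2}}B^{r}A^{\frac{1-r}{2}}}$ is the composition of the Ando--Hiai log‑majorizations $A\sharp_r B\prec_{\log}e^{(1-r)\log A+r\log B}\prec_{\log}A^{\frac{1-r}{2}}B^{r}A^{\frac{1-r}{2}}$ and can be quoted from \cite{AH1994} (or \cite{H2019}); for $r\le0$ one argues as in \cite{BLP2005} and \cite{MF2009}.

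The main obstacle is this last step in the regimes $|r|>1$. After normalization one is left with an implication of the shape ``$P\le Q^{a}\ \Rightarrow\ R^{b}\le S$'' with $b>1$, and since $t\mapsto t^{b}$ is not operator monotone the bare L\"owner--Heinz argument collapses; an Araki--Lieb--Thirring / Furuta estimate must be inserted at exactly that point, which is where the genuine content of (ii)--(v) lies. It is also worth stressing that one cannot shorten the argument by conjugating a known log‑majorization by $A^{\pm1/2}$ or $B^{\pm1/2}$: log‑majorization — indeed, already the comparison of largest eigenvalues — is not preserved under congruence $X\mapsto SXS^{*}$, which is precisely why the detour through antisymmetric tensor powers and the norm inequalities is needed.
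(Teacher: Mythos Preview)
Your plan is sound, and indeed it is more detailed than what the paper itself offers: the paper does not prove Proposition~\ref{prop_known_fact} but simply records it as known, citing \cite[Theorem~1.1]{H2019} for (i)--(iii), \cite[Theorem~2.1]{BLP2005} for (iv), and \cite[Theorem~3.1]{MF2009} for (v), together with the explicit parameter substitutions $p:=1$, $q:=-r$, $B\mapsto B^{-1}$ (for (iv)) and $s:=1$, $t:=-r$, $B\mapsto B^{-1}$ (for (v)). Your reduction to operator-norm inequalities via antisymmetric tensor powers, followed by the normalization device and L\"owner--Heinz/Furuta-type arguments, is exactly the machinery underlying those cited results, so the approaches coincide at the level of substance.

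One genuine addition in your write-up, not present in the paper, is the transposition symmetry $A\hat\sharp_r B=B\hat\sharp_{1-r}A$ combined with the isospectrality of $A^{\frac{1-r}{2}}B^{r}A^{\frac{1-r}{2}}$ and $B^{\frac{r}{2}}A^{1-r}B^{\frac{r}{2}}$, which pairs (ii) with (iv) and (iii) with (v); this is correct and economical, and it halves the work relative to treating all five ranges separately as the cited sources do. Your closing caveat is also accurate: the cases $|r|>1$ genuinely require a Furuta/Araki--Lieb--Thirring step beyond bare L\"owner--Heinz, and that is precisely the content of \cite{BLP2005,MF2009,H2019}.
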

Indeed, if we take $p:=1$, $q:=-r$ and $B:=B^{-1}$ in \cite[Theorem 2.1]{BLP2005}, then we obtain (iv). Also, if we take $s:=1$, $t:=-r$ and $B:=B^{-1}$ in  \cite[Theorem 3.1]{MF2009} , then we obtain (v).

In addition, it is known the following relation:
\begin{equation*}\label{eq_Gan}
A^{\frac{1-t}{2}}B^tA^{\frac{1-t}{2}} \prec_{\log} A\spe_t B,\quad (0\le t \le 1)
\end{equation*}
from \cite[Theorem 3.6]{GT2022} as a special case. In this paper, we consider the case $t\notin (0,1)$. We also consider an application of the relation among the Tsallis relative entropy, the Tsallis relative operator entropy and the Tsallis relative operator entropy due to the spectral geometric mean. 

In order to show the following result, we prepare the known facts. 
%See \cite{M1970} for Lemma \ref{lemma3.1} (ii).
\begin{lemma}\label{lemma3.1}
Let $X$ and $Y$ be positive definite matrices. %Then
\begin{itemize}
\item[(i)] If $X \le Y$ and $-1\le \alpha \le 0$, then $X^{\alpha} \ge Y^{\alpha}$.
\item[(ii)] If $Z$ is a positive definite matrix and $XZX\le YZY$, then  $X \le Y$.
%\item[(iii)] If $s\ge 0$ and $X\le Y$, then we have $\lambda_1(X^s)\le \lambda_1(Y^s)$.
\end{itemize}
\end{lemma}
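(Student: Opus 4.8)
The plan is to derive both items as short consequences of the L\"owner--Heinz inequality together with the antitonicity of operator inversion; no genuinely new estimate is needed, and the only care required is to check that every manipulation respects the operator order and that all square roots and powers are well defined, which they are since $X,Y,Z$ are positive definite.

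For (i), I would set $\alpha=-\beta$ with $0\le\beta\le 1$. From $0<X\le Y$ and the fact that $t\mapsto t^{-1}$ is operator monotone decreasing on $(0,\infty)$ one gets $Y^{-1}\le X^{-1}$. Since $t\mapsto t^{\beta}$ is operator monotone for $\beta\in[0,1]$ by L\"owner--Heinz, applying it to $Y^{-1}\le X^{-1}$ yields $(Y^{-1})^{\beta}\le (X^{-1})^{\beta}$, that is $Y^{\alpha}\le X^{\alpha}$, which is the assertion. The endpoints $\alpha=0$ and $\alpha=-1$ are immediate.

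For (ii), I would congruence-transform the hypothesis by the Hermitian matrix $Z^{1/2}$. Conjugation $S\mapsto S^{*}(\cdot)S$ preserves the L\"owner order, so $XZX\le YZY$ gives $Z^{1/2}XZXZ^{1/2}\le Z^{1/2}YZYZ^{1/2}$; using $Z^{1/2}XZXZ^{1/2}=(Z^{1/2}XZ^{1/2})^{2}$ and likewise for $Y$, this becomes $U^{2}\le V^{2}$ with $U:=Z^{1/2}XZ^{1/2}>0$ and $V:=Z^{1/2}YZ^{1/2}>0$. Now operator monotonicity of $t\mapsto t^{1/2}$ (again L\"owner--Heinz) gives $U\le V$, and conjugating back by $Z^{-1/2}$ yields $X\le Y$.

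The steps are all routine, and if there is any obstacle it is purely bookkeeping: correctly recognizing $Z^{1/2}XZXZ^{1/2}$ as a perfect square (which uses that $X$ and $Z^{1/2}$ are Hermitian), and keeping straight that the congruence by a matrix is order preserving while inversion is order reversing. Neither point requires anything beyond the definitions.
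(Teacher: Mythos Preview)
Your proof is correct and follows essentially the same route as the paper: part (i) is obtained by combining the order-reversal of inversion with the L\"owner--Heinz inequality for the exponent $\beta=-\alpha\in[0,1]$, and part (ii) is obtained by conjugating with $Z^{1/2}$, recognizing $Z^{1/2}XZXZ^{1/2}=(Z^{1/2}XZ^{1/2})^{2}$, taking the operator square root via L\"owner--Heinz, and conjugating back.
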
 

\begin{proof}
(i) can be proven by the facts $X\le Y \Longrightarrow X^{-1}\ge Y^{-1}$ and the L\"owner--Heinz inequality 	$X\le Y  \Longrightarrow X^s\le Y^s$ for $0\le s \le 1$. 

(ii) From the assumptions, we have $Z^{1/2}XZXZ^{1/2}\le Z^{1/2}YZYZ^{1/2}$ so that $\left(Z^{1/2}XZ^{1/2}\right)^2\le \left(Z^{1/2}YZ^{1/2}\right)^2$. By the L\"owner--Heinz inequality, we have
 $Z^{1/2}XZ^{1/2}\le Z^{1/2}YZ^{1/2}$ which implies $X\le Y$.
%Set $W:=YZY-XZX$. Since $W \ge 0$ for a certain $Z> 0$, we have $Z-Y^{-1}XZXY^{-1}=Y^{-1}WY^{-1}\ge 0$. Thus we have $\rho (Y^{-1}X)\le 1$, where $\rho(A):=\max_{i}|\lambda_i|$ represents the spectral radius of $A$, because we can show that if there exists $S> 0$ such that $S-A^*SA\ge 0$, then $\rho(A)\le 1$. 
%
%Indeed, let $x$ is an eigenvector corresponding to $\lambda$ for a matrix $A$, that is, $Ax =\lambda x,\,\,x \neq 0$. 
%Then we have $0\le \langle (S-A^*SA)x,x\rangle=(1-|\lambda|^2)\langle Sx,x\rangle$, which implies $1-|\lambda|^2\ge 0$. Thus we obtain $\rho(A)\le 1$, since $\lambda$ is an eienvalue of $A$.
%
%Therefore we have $\rho\left(Y^{-1/2}XY^{-1/2}\right)\le 1$,
%since $Y^{-1}X=Y^{-1/2}\left(Y^{-1/2}X\right)$ and $\left(Y^{-1/2}X\right)Y^{-1/2}$ have same eigenvalues. Thus we have $I-Y^{-1/2}XY^{-1/2}\ge 0$ which implies that $Y-X=Y^{1/2}\left(I-Y^{-1/2}XY^{-1/2}\right)Y^{1/2}\ge 0$.

%(iii) From the condition  $X\le Y$, we have $\lambda_1(X)\le \lambda_1(Y)$ which implies $\lambda_1(X)^s\le \lambda_1(Y)^s$. Thus we have $\lambda_1(X^s)\le \lambda_1(Y^s)$ since $\lambda_1(A^s)=\lambda_1(A)^s$ for $s\ge 0$ and $A>0$ in general.
\end{proof}

\begin{theorem}\label{theorem3.1} 
Let $-\frac12\le t \le 0$ or $1\leq t \leq \frac{3}{2}$. For positive definite matrices $A$ and $B$, 
\begin{equation} \label{eq:m12}
A \hat{\spe}_t B \prec_{\log} A^{\frac{1-t}{2}}B^{t}A^{\frac{1-t}{2}}.
\end{equation}
\end{theorem}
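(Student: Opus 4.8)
The plan is to reduce the log-majorization \eqref{eq:m12} to the operator norm inequality already recorded in Theorem~\ref{norm_main} by passing to antisymmetric tensor powers, dealing with the determinant condition by a separate direct computation. Let $n$ be the size of the matrices and, for $1\le k\le n-1$, let $\wedge^k$ denote the $k$-th antisymmetric tensor power; I will use the standard fact that $\prod_{j=1}^{k}\lambda_j(X)=\lambda_1(\wedge^k X)=\NORM{\wedge^k X}$ for positive definite $X$. Thus \eqref{eq:m12} is equivalent to the two assertions that $\det(A\hat{\spe}_t B)=\det(A^{\frac{1-t}{2}}B^{t}A^{\frac{1-t}{2}})$ and that $\NORM{\wedge^k(A\hat{\spe}_t B)}\le\NORM{\wedge^k(A^{\frac{1-t}{2}}B^{t}A^{\frac{1-t}{2}})}$ for every $k$.

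First I would dispose of the determinant: using $\det(A^{-1}\sharp B)=(\det A)^{-1/2}(\det B)^{1/2}$, a direct computation gives
\begin{equation*}
\det\bigl(A\hat{\spe}_t B\bigr)=\bigl(\det(A^{-1}\sharp B)\bigr)^{2t}\det A=(\det A)^{1-t}(\det B)^{t}=\det\bigl(A^{\frac{1-t}{2}}B^{t}A^{\frac{1-t}{2}}\bigr),
\end{equation*}
so both sides of \eqref{eq:m12} have equal determinant.

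Next I would use that $\wedge^k$ is multiplicative, $\ast$-preserving and unital, hence commutes with inversion, with positive square roots, and therefore with arbitrary real powers of positive definite matrices and with the operator geometric mean; this yields
\begin{equation*}
\wedge^k\bigl(A\hat{\spe}_t B\bigr)=(\wedge^k A)\,\hat{\spe}_t\,(\wedge^k B),\qquad \wedge^k\bigl(A^{\frac{1-t}{2}}B^{t}A^{\frac{1-t}{2}}\bigr)=(\wedge^k A)^{\frac{1-t}{2}}(\wedge^k B)^{t}(\wedge^k A)^{\frac{1-t}{2}}.
\end{equation*}
Then I would apply the first inequality of Theorem~\ref{norm_main} to the positive definite matrices $\wedge^k A$ and $\wedge^k B$ --- whose admissible $t$-range there, $[-\tfrac12,0]\cup[1,\tfrac32]$, is exactly the one in the present statement --- to conclude
\begin{equation*}
\prod_{j=1}^{k}\lambda_j\bigl(A\hat{\spe}_t B\bigr)=\NORM{(\wedge^k A)\,\hat{\spe}_t\,(\wedge^k B)}\le\NORM{(\wedge^k A)^{\frac{1-t}{2}}(\wedge^k B)^{t}(\wedge^k A)^{\frac{1-t}{2}}}=\prod_{j=1}^{k}\lambda_j\bigl(A^{\frac{1-t}{2}}B^{t}A^{\frac{1-t}{2}}\bigr),
\end{equation*}
which together with the determinant identity gives \eqref{eq:m12}.

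Essentially all the genuine content sits in Theorem~\ref{norm_main}, whose first inequality reduces by homogeneity to the implication ``$A^{\frac{1-t}{2}}B^{t}A^{\frac{1-t}{2}}\le I\ \Rightarrow\ A\hat{\spe}_t B\le I$''. Establishing this implication is the step I expect to be the main obstacle, and the restriction $t\in[-\tfrac12,0]\cup[1,\tfrac32]$ is precisely what keeps the intermediate exponents in the intervals where the L\"owner--Heinz inequality and Lemma~\ref{lemma3.1} can be applied. Since Theorem~\ref{norm_main} is already available, only the tensor-power bookkeeping above remains; were it not available, I would instead prove that implication directly here and then run the same $\wedge^k$-reduction.
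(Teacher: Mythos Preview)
Your proposal is correct and is essentially the same approach as the paper's: reduce via antisymmetric tensor powers (equivalently, compound matrices) and the determinant identity to the implication $A^{\frac{1-t}{2}}B^{t}A^{\frac{1-t}{2}}\le I\Rightarrow A\hat{\spe}_t B\le I$, which is exactly what the paper proves directly (and packages, for operators, as the first inequality of Theorem~\ref{norm_main}). The paper itself notes in Remark~(i) immediately after Theorem~\ref{theorem3.1} that the result is a direct consequence of Theorem~\ref{norm_main} together with homogeneity and the determinant computation, so your organization matches what the authors explicitly acknowledge; the only cosmetic difference is that the paper reproves the implication in place rather than citing Theorem~\ref{norm_main}, and handles $t\in[1,\tfrac32]$ by the swap $A\hat{\spe}_t B=B\hat{\spe}_{1-t}A$.
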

\begin{proof}
Let $-\frac{1}{2}\leq t \leq 0$. We consider the Grassmann product and the $k$--th compound matrix $C_k(A)$ \cite[Chapter 19]{MO1979} for $n\times n$ positive definite matrix $A$
and the use of the known fact  $\prod\limits_{i=1}^k\lambda_i(A)=\lambda_1\left(C_k(A)\right)$ for $k=1,2,\cdots,n$, where $\lambda_i(A)$ represents the $i$--th largest eigenvalue of $A$. Since we have $\det A \hat{\spe}_t B=(\det A)^{1-t} (\det B)^t = \det \left(A^{\frac{1-t}{2}}B^{t}A^{\frac{1-t}{2}}\right)$, we have only to prove
$$
\lambda_1\left(C_k\left( A \hat{\spe}_t B\right)\right) \le \lambda_1\left(C_k\left(A^{\frac{1-t}{2}}B^{t}A^{\frac{1-t}{2}}\right)\right),\,\,(-\frac12\le t \le 0,\,\,k=1,2,\cdots,n-1).
$$
For $k=1,2,\cdots,n$ and $n\times n$ matrices $A,B$, we have the properties such as
$C_k(AB)=C_k(A)C_k(B)$, $C_k(A^*)=C_k(A)^*$, and $C_k(A^{-1})=C_k(A)^{-1}$ if $A$ is non--singular.
Thus we have only to prove  for $-\frac12\le t \le 0$,
\begin{equation}\label{theorem3.1_proof_eq01}
\lambda_1\left( A \hat{\spe}_t B\right) \le \lambda_1\left(A^{\frac{1-t}{2}}B^{t}A^{\frac{1-t}{2}}\right). %,\,\,(k=1,2,\cdots,n-1).
\end{equation}
In addition, for $\alpha, \beta >0$ we have  homogeneity as 
$$\left(\alpha A\right)\hat{\spe}_t \left(\beta B\right) =\alpha^{1-t}\beta^{t}(A \hat{\spe}_t B) \quad \text{and}\quad \left(\beta B\right)^{t/2}\left(\alpha A\right)^{1-t}\left(\beta B\right)^{t/2}=\alpha^{1-t}\beta^{t}\left(A^{\frac{1-t}{2}}B^{t}A^{\frac{1-t}{2}}\right).$$
Therefore it is sufficient to prove  for $-\frac12\le t \le 0$,
\begin{equation}\label{theorem3.1_proof_eq02}
A^{\frac{1-t}{2}}B^{t}A^{\frac{1-t}{2}}\le I \quad {\rm implies}\quad A \hat{\spe}_t B\le I,
\end{equation}
in order to prove \eqref{theorem3.1_proof_eq01}.

Assume $A^{\frac{1-t}{2}}B^{t}A^{\frac{1-t}{2}}\le I$ which implies $A\le B^{\frac{-t}{1-t}}$ since $0< \frac{1}{1-t} \le 1$ for all $t\le 0$. Thus we have $B^{\frac{1}{2(1-t)}}AB^{\frac{1}{2(1-t)}}\le B=CAC$ which is  the Riccati equation by  setting $C:=A^{-1}\sharp B$. By Lemma \ref{lemma3.1} (ii), we have
$B^{\frac{1}{2(1-t)}}\le C$ so that  $C^{2t} \le B^{\frac{t}{1-t}}$ for $-1\le 2t\le 0$ by Lemma \ref{lemma3.1} (i). This with $A\le B^{\frac{-t}{1-t}}$ , we have $A\le B^{\frac{-t}{1-t}}\le C^{-2t}$.
Thus we have $C^tAC^t \le I$ so that we obtained the implication \eqref{theorem3.1_proof_eq02} for $-\frac12\le t \le 0$.

Let $1\leq t \leq \frac{3}{2}$. Since $-\frac{1}{2}\leq 1-t \leq 0$, it follows from \eqref{theorem3.1_proof_eq01} that
\[
\lambda_1\left( B \hat{\spe}_{1-t} A\right) \le \lambda_1\left(B^{\frac{t}{2}}A^{1-t}B^{\frac{t}{2}}\right)
\]
and hence
\[
\lambda_1\left( A \hat{\spe}_t B\right) \le \lambda_1\left(A^{\frac{1-t}{2}}B^{t}A^{\frac{1-t}{2}}\right). 
\]
Therefore we have the desired inequality \eqref{eq:m12} for $1\leq t \leq \frac{3}{2}$.

%We prove the implication \eqref{theorem3.1_proof_eq02} for $1\le t \le \frac{3}{2}$.
%Replacing $A$ by $B$ and setting $s:=1-t$ in \eqref{theorem3.1_proof_eq02}, we have $-\frac{1}{2}\le s \le 0$ from $1\le t \le \frac{3}{2}$ and 
%\begin{align*}
%A^{\frac{1-t}{2}}B^{t}A^{\frac{1-t}{2}}\le I &\Longleftrightarrow A^{\frac{1-s}{2}}B^sA^{\frac{1-s}{2}}\le I \\
%& \Longrightarrow C^sAC^s \le I \quad ({\rm by \,\,\eqref{theorem3.1_proof_eq02}},\,\, ({\rm where}\,\,\, C:=A^{-1}\s B) \\
%& \Longleftrightarrow \tilde{C}^{1-t}B\tilde{C}^{1-t}\le I\,\,\,({\rm where}\,\,\,  \tilde{C}:=B^{-1}\s A=A\s B^{-1})\\
%& \text{\qquad(we replaced $A$ by $B$ again and set $s:=1-t$)}\\
%& \Longleftrightarrow C^{t-1}BC^{t-1}\le I\,\,\, ({\rm since}\,\,\, \tilde{C}^{-1}=C)\\
%& \Longleftrightarrow C^t AC^t \le I\,\,\, ({\rm since}\,\,\, B=CAC).
%\end{align*}
%Thus we obtained the implication \eqref{theorem3.1_proof_eq02} for $1\le t \le \frac{3}{2}$.\
\end{proof}

\begin{remark}
\begin{itemize}
\item[(i)] Theorem \ref{theorem3.1} is a direct consequence of Theorem \ref{norm_main} with the homogeneity and the determinant of $A\hat{\spe}_tB$ and $A^{\frac{1-t}{2}}B^tA^{\frac{1-t}{2}}$.
Since the proof in  Theorem \ref{theorem3.1} is not trivial for the readers, we gave its proof.
\item[(ii)]Since we have $\lambda_1(A^s)=\lambda_1(A)^s$ for $s\ge 0$ and $A>0$, we have the following relation under the same assumption in Theorem \ref{theorem3.1}:
\begin{equation*}
\left(A^s \hat{\spe}_t B^s\right)^{1/s} \prec_{\log} \left(A^{\frac{(1-t)s}{2}}B^{ts}A^{\frac{(1-t)s}{2}}\right)^{1/s},\quad (s>0)
\end{equation*}
by the similar way to the proof in Theorem \ref{theorem3.1}.
\end{itemize}
\end{remark}

We do not know whether the log-majorization \eqref{eq:m12} holds or not for $t\in [-1,-\frac{1}{2}]$.  However, in the case of $t=-1$, we have the following theorem.
To prove it, we need the following reverse BLP inequality \cite{MF2009,FMPS2}:
\begin{lemma}{{\rm (R-BLP inequality)}}\label{thm-RBLP}\ 
Let $A$ and $B$ be positive definite matrices. Then
\[
\NORM{A^{\frac{1+r}{2}} B^r A^{\frac{1+r}{2}}} \geq \NORM{A^{\frac{1}{2}}(A^{\frac{s}{2}} B^s A^{\frac{s}{2}})^{\frac{r}{s}} A^{\frac{1}{2}}}
\]
for all $r\geq s\geq 1$.
\end{lemma}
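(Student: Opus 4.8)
The plan is to use the homogeneity-plus-normalization device of the Furuta-inequality circle (cf.\ \cite{FMPS2}), the same one already used in the proofs of Theorem~\ref{thm-m} and Theorem~\ref{theorem3.1}. First I would observe that both sides of the asserted inequality are jointly homogeneous of the same degrees in $A$ and $B$: under $A\mapsto\lambda A$, $B\mapsto\mu B$ with $\lambda,\mu>0$, each side picks up the factor $\lambda^{1+r}\mu^{r}$ (for the right-hand side one uses that the inner factor $A^{s/2}B^{s}A^{s/2}$ scales by $\lambda^{s}\mu^{s}$, that the exponent is $r/s$, and then the outer $A^{1/2}\cdots A^{1/2}$). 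Since a positive operator $X$ satisfies $\NORM{X}\le1$ if and only if $X\le I$, normalizing so that $\NORM{A^{\frac{1+r}{2}}B^{r}A^{\frac{1+r}{2}}}=1$ shows that the lemma reduces to proving the operator implication
\[
A^{\frac{1+r}{2}}B^{r}A^{\frac{1+r}{2}}\le I\quad\Longrightarrow\quad A^{\frac12}\big(A^{\frac s2}B^{s}A^{\frac s2}\big)^{\frac rs}A^{\frac12}\le I\qquad(r\ge s\ge1).
\]

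Next I would unwind both sides of this implication. Since $0<\frac1r\le1$, the hypothesis is equivalent to $B^{r}\le A^{-(1+r)}$, hence by the L\"owner--Heinz inequality implies $B\le A^{-\frac{1+r}{r}}$; writing $W:=A^{-1}>0$ this reads $W^{\frac{1+r}{r}}\ge B\ge0$. On the other hand, conjugating the desired conclusion by $A^{-1/2}=W^{1/2}$ and using $A^{s/2}=W^{-s/2}$, it becomes exactly $\big(W^{-\frac s2}B^{s}W^{-\frac s2}\big)^{\frac rs}\le W$. So everything comes down to deducing the operator inequality $\big(W^{-\frac s2}B^{s}W^{-\frac s2}\big)^{\frac rs}\le W$ from the order relation $W^{\frac{1+r}{r}}\ge B$. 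A nested expression of exactly this shape---$(X^{-a}Y^{b}X^{-a})^{c}$ dominated by a power of $X$, starting from an order relation $X^{\sigma}\ge Y$---is what the grand Furuta inequality is designed to control (in the simplest sub-cases the ordinary Furuta inequality already suffices). The plan is to apply it with exponents chosen so that its conclusion reads verbatim as $\big(W^{-\frac s2}B^{s}W^{-\frac s2}\big)^{\frac rs}\le W$, and then the asserted norm inequality follows from the reduction above.

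The step I expect to be the main obstacle is the exponent bookkeeping: one has to check that the single constraint $r\ge s\ge1$ translates exactly into the admissibility conditions of the Furuta-type inequality invoked (the negative inner exponent lying in $[0,1]$, the two exponents required to be $\ge1$, and the inequality linking the outermost power to the others). Everything else---the homogeneity reduction, the one L\"owner--Heinz step, and reading off the norm bound---is routine. A more elementary-looking alternative would replace the Furuta input by the Araki--Lieb--Thirring log-majorization $\big(A^{\frac s2}B^{s}A^{\frac s2}\big)^{\frac rs}\prec_{\log}A^{\frac r2}B^{r}A^{\frac r2}$ together with a monotonicity argument in $s\in[1,r]$ and the compound-matrix reduction used in the proof of Theorem~\ref{theorem3.1}; but a log-majorization by itself does not control the spectral radius of a product $WX$, so that route still needs an operator-inequality ingredient, which is why the Furuta-based plan is the cleaner one.
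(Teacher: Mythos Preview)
The paper does not give its own proof of this lemma; it is quoted as a known result from \cite{MF2009,FMPS2}. Your homogeneity reduction to the implication
\[
B^{r}\le W^{1+r}\ \Longrightarrow\ \bigl(W^{-s/2}B^{s}W^{-s/2}\bigr)^{r/s}\le W\qquad(W:=A^{-1},\ r\ge s\ge1)
\]
is correct and is the standard opening move for this kind of statement. The genuine gap is the next step: you pass via L\"owner--Heinz from $B^{r}\le W^{1+r}$ to the strictly weaker relation $B\le W^{(1+r)/r}$ and then propose to close with a Furuta-type inequality applied from that. This weakening is fatal. For $r=2$, $s=1$ take
\[
W=\begin{pmatrix}4&0\\0&1\end{pmatrix},\qquad B=\begin{pmatrix}7&\sqrt{0.1}\\\sqrt{0.1}&0.9\end{pmatrix}.
\]
Then $W^{3/2}-B$ is positive semidefinite (its determinant vanishes), so your weakened hypothesis $B\le W^{(1+r)/r}$ holds; but a direct calculation gives
\[
\det\!\Bigl(W-\bigl(W^{-1/2}BW^{-1/2}\bigr)^{2}\Bigr)\approx -0.025<0,
\]
so the target conclusion $\bigl(W^{-1/2}BW^{-1/2}\bigr)^{2}\le W$ fails. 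Hence \emph{no} Furuta-type inequality---grand or otherwise---can recover the conclusion from $B\le W^{(1+r)/r}$ alone, and the ``exponent bookkeeping'' you flag as the main obstacle is in fact impossible after the L\"owner--Heinz step. (For this pair one also has $B^{2}\not\le W^{3}$, so the example does not contradict the lemma itself; it only shows that your particular route is blocked.)

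Any correct Furuta-based argument must retain the full strength $W^{1+r}\ge B^{r}$ throughout rather than first descending to $B\le W^{(1+r)/r}$; this is how the original proof in \cite{MF2009} proceeds.
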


\begin{theorem} %\label{thm-m1}
Let $A$ and $B$ be positive definite matrices. Then %If $t =-1$ in \eqref{eq:m12}, then
\[
A \hat \spe_{-1}  B \prec_{\log} AB^{-1}A.
\]
\end{theorem}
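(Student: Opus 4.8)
The plan is to reduce the stated log-majorization to the single norm inequality $\NORM{A\hat{\spe}_{-1}B}\le\NORM{AB^{-1}A}$ and then to read that inequality off from the R-BLP inequality (Lemma~\ref{thm-RBLP}). For the reduction I would argue exactly as in the proof of Theorem~\ref{theorem3.1}. First, $\det\left(A\hat{\spe}_{-1}B\right)=(\det A)^{2}(\det B)^{-1}=\det\left(AB^{-1}A\right)$, so the determinant condition in the definition of $\prec_{\log}$ is automatic. Second, the $k$-th compound matrix $C_k(\cdot)$ commutes with products, inverses, adjoints and positive square roots, hence with $X\mapsto X^{-1}\sharp Y$ and therefore with $X\mapsto X\hat{\spe}_{-1}Y$; thus, applying the norm inequality to the pair $C_k(A),C_k(B)$ yields $\prod_{i=1}^{k}\lambda_{i}\!\left(A\hat{\spe}_{-1}B\right)=\lambda_{1}\!\left(C_k\!\left(A\hat{\spe}_{-1}B\right)\right)\le\lambda_{1}\!\left(C_k\!\left(AB^{-1}A\right)\right)=\prod_{i=1}^{k}\lambda_{i}\!\left(AB^{-1}A\right)$ for every $k$, which together with the determinant identity is precisely the claimed log-majorization. (Equivalently, by homogeneity this reduction amounts to the implication $AB^{-1}A\le I\Rightarrow A\hat{\spe}_{-1}B\le I$, in the same spirit as Theorem~\ref{theorem3.1}.)

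It remains to prove $\NORM{A\hat{\spe}_{-1}B}\le\NORM{AB^{-1}A}$ for positive definite matrices of any size. I would set $R:=A^{-1/2}B^{-1}A^{-1/2}$, so that $B^{-1}=A^{1/2}RA^{1/2}$ and hence $AB^{-1}A=A^{3/2}RA^{3/2}$. Using $\left(A^{-1}\sharp B\right)^{-1}=A\sharp B^{-1}=A^{1/2}R^{1/2}A^{1/2}$ one gets
\[
A\hat{\spe}_{-1}B=\left(A^{-1}\sharp B\right)^{-1}A\left(A^{-1}\sharp B\right)^{-1}=A^{1/2}R^{1/2}A^{2}R^{1/2}A^{1/2}.
\]
Here comes the one point that is not routine: the naive imitation of Theorem~\ref{theorem3.1} leads to the implication ``$A^{2}\le B\Rightarrow A\le\left(A^{-1}\sharp B\right)^{2}$'', which cannot be reached by order arguments, because neither $Y\ge X\Rightarrow Y^{2}\ge X^{2}$ nor $X\le Y\Rightarrow XZX\le YZY$ holds in general; this is exactly where R-BLP must enter. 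The trick is to observe, with $X:=A^{1/2}R^{1/2}A$, that $A\hat{\spe}_{-1}B=XX^{*}$ while $X^{*}X=AR^{1/2}AR^{1/2}A=A^{1/2}\left(A^{1/2}R^{1/2}A^{1/2}\right)^{2}A^{1/2}$, so that the identity $\NORM{XX^{*}}=\NORM{X^{*}X}$ gives
\[
\NORM{A\hat{\spe}_{-1}B}=\NORM{A^{1/2}\left(A^{1/2}R^{1/2}A^{1/2}\right)^{2}A^{1/2}},
\]
which is now exactly of the shape of the right-hand side of the R-BLP inequality.

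Finally I would apply Lemma~\ref{thm-RBLP} to the positive definite matrices $A$ and $R^{1/2}$ with the admissible exponents $r=2\ge s=1\ge 1$, obtaining $\NORM{AB^{-1}A}=\NORM{A^{3/2}RA^{3/2}}=\NORM{A^{\frac{1+2}{2}}\left(R^{1/2}\right)^{2}A^{\frac{1+2}{2}}}\ge\NORM{A^{1/2}\left(A^{1/2}R^{1/2}A^{1/2}\right)^{2}A^{1/2}}=\NORM{A\hat{\spe}_{-1}B}$, which is what was needed. I expect the main obstacle to be precisely the reshaping in the previous paragraph (spotting that $\NORM{A\hat{\spe}_{-1}B}$ coincides with the R-BLP right-hand side for the pair $(A,R^{1/2})$ at $r=2$, $s=1$, via $\NORM{XX^{*}}=\NORM{X^{*}X}$); a minor point to check is that $C_k$ genuinely intertwines $\hat{\spe}_{-1}$, which it does since it preserves inverses and positive square roots.
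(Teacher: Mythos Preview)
Your proof is correct and essentially identical to the paper's: both reduce the log-majorization to the norm inequality $\NORM{A\hat{\spe}_{-1}B}\le\NORM{AB^{-1}A}$ via compound matrices/homogeneity, rewrite $\NORM{A\hat{\spe}_{-1}B}$ as $\NORM{A^{1/2}(A\sharp B^{-1})^{2}A^{1/2}}=\NORM{A^{1/2}\bigl(A^{1/2}R^{1/2}A^{1/2}\bigr)^{2}A^{1/2}}$ (the paper via the identity $\NORM{CAC}=\NORM{A^{1/2}C^{2}A^{1/2}}$, you via the equivalent $\NORM{XX^{*}}=\NORM{X^{*}X}$), and then apply the R-BLP inequality with $r=2$, $s=1$ to the pair $\bigl(A,(A^{-1/2}B^{-1}A^{-1/2})^{1/2}\bigr)$.
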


\begin{proof}
It follows from R-BLP inequality that 
\begin{align*}
\NORM{A \hat \spe_{-1}  B} & = \NORM{A^{1/2}(A^{-1} \s B)^{-2} A^{1/2}} \\
& = \NORM{A^{1/2}(A \s B^{-1})^2 A^{1/2}} \\
& = \NORM{A^{1/2} ( A^{1/2}(A^{-1/2}B^{-1}A^{-1/2})^{1/2} A^{1/2})^2 A^{1/2}}\\
& \leq \NORM{A^{1/2}(A(A^{-1/2}B^{-1}A^{-1/2})A)A^{1/2}}\quad \mbox{(by $r=2, s=1$ in Lemma~\ref{thm-RBLP})}\\
& = \NORM{AB^{-1}A}.
\end{align*}
Hence $AB^{-1}A\leq I$ implies $A \hat \spe_{-1}  B \leq I$, and so we have the desired log-majorization.
\end{proof}

Generalizing the above method a bit more, we obtain the following result:

\begin{theorem}\label{theorem4.7}
Let $A$ and $B$ be positive definite matrices. If $-1\leq t \leq -\frac{1}{2}$, then
\[
 A \hat \spe_t B \prec_{\log} (A^{1-t}B^{2t}A^{1-t})^{\frac{1}{2}} = |A^{1-t} B^{t}|.
\]
\end{theorem}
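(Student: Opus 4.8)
The plan is to reduce the log‑majorization to a single operator‑norm inequality for arbitrary positive definite matrices and then to establish that inequality by the method used for the case $t=-1$ (the R‑BLP inequality, Lemma~\ref{thm-RBLP}), applied once more.

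First I would set up the reduction exactly as in the proofs of Theorem~\ref{theorem3.1} and Theorem~\ref{thm-m}. Both sides of the assertion are homogeneous of the same degree in $(A,B)$ — degree $1-t$ in $A$ and degree $t$ in $B$ — and have equal determinant, $\det(A\hat{\spe}_t B)=(\det A)^{1-t}(\det B)^t=\det\big((A^{1-t}B^{2t}A^{1-t})^{1/2}\big)$. Passing to the $k$‑th compound matrix $C_k$, which intertwines products, adjoints, inverses and real powers of positive definite matrices (so that $C_k(A\hat{\spe}_t B)=C_k(A)\hat{\spe}_t\,C_k(B)$ and $C_k\big((A^{1-t}B^{2t}A^{1-t})^{1/2}\big)=\big(C_k(A)^{1-t}C_k(B)^{2t}C_k(A)^{1-t}\big)^{1/2}$) and using $\lambda_1(C_k(X))=\prod_{j=1}^k\lambda_j(X)$, the desired log‑majorization follows once one proves, for all positive definite $A,B$,
\[
\NORM{A\hat{\spe}_t B}\le\NORM{\big(A^{1-t}B^{2t}A^{1-t}\big)^{1/2}}=\NORM{A^{1-t}B^t},
\]
which, by homogeneity, is equivalent to the implication $A^{1-t}B^{2t}A^{1-t}\le I\ \Rightarrow\ A\hat{\spe}_t B\le I$.

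Next I would estimate $\NORM{A\hat{\spe}_t B}$ by R‑BLP, as in the proof of the $t=-1$ case. Using $(A^{-1}\s B)^{-1}=A\s B^{-1}$, the identity $\NORM{X^*X}=\NORM{XX^*}$, and $-2t\ge 1$,
\[
\NORM{A\hat{\spe}_t B}=\NORM{A^{1/2}(A^{-1}\s B)^{2t}A^{1/2}}=\NORM{A^{1/2}\big(A\s B^{-1}\big)^{-2t}A^{1/2}}.
\]
Since $A\s B^{-1}=A^{1/2}(A^{-1/2}B^{-1}A^{-1/2})^{1/2}A^{1/2}$, Lemma~\ref{thm-RBLP} applied with $r=-2t\ (\ge 1)$ and $s=1$ to the matrices $A$ and $(A^{-1/2}B^{-1}A^{-1/2})^{1/2}$ gives
\[
\NORM{A\hat{\spe}_t B}\le\NORM{A^{\frac{1-2t}{2}}\big(A^{-1/2}B^{-1}A^{-1/2}\big)^{-t}A^{\frac{1-2t}{2}}}=\NORM{A^{-t}\big(A\sharp_{-t}B^{-1}\big)A^{-t}},
\]
the last equality because $A^{\frac{1-2t}{2}}=A^{-t}A^{1/2}$ and $A^{1/2}(A^{-1/2}B^{-1}A^{-1/2})^{-t}A^{1/2}=A\sharp_{-t}B^{-1}$. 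For $t=-1$ the right‑hand side is $\NORM{AB^{-1}A}$, so that case is already done (this is precisely the estimate used in the proof of the preceding theorem).

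It remains to handle $t\in(-1,-\tfrac12]$, where $-t$ is a genuine fractional exponent. Setting $p:=-t\in(\tfrac12,1]$ and replacing $B$ by $B^{-1}$, what is left is the norm inequality
\[
\NORM{A^p\big(A\sharp_p B\big)A^p}\le\NORM{A^{1+p}B^p}\qquad\big(p\in[\tfrac12,1]\big).
\]
I would derive this by a second application of R‑BLP: rewrite the left side as $A^{p+\frac12}\big(A^{-1/2}BA^{-1/2}\big)^{p}A^{p+\frac12}$, use $\NORM{A^{1+p}B^p}^2=\NORM{A^{1+p}B^{2p}A^{1+p}}=\NORM{B^pA^{2(1+p)}B^p}$ together with $B=A^{1/2}(A^{-1/2}BA^{-1/2})A^{1/2}$ so that both sides are expressed through $A$ and $C:=A^{-1/2}BA^{-1/2}$, and then compare the resulting power of a sandwich with a sandwich of a power via Lemma~\ref{thm-RBLP} with $r=2p\ (\ge 1)$, using $\NORM{X^*X}=\NORM{XX^*}$ repeatedly to rearrange the $A$‑factors. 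I expect this last step to be the main obstacle: the naive operator‑inequality approach fails (passing the hypothesis to an implication $B^p\le A^{-2(1+p)}\Rightarrow A\sharp_p B\le A^{-2p}$ would require $x\mapsto x^{1/p}$ to be operator monotone, which it is not for $p<1$), so an Araki–Lieb–Thirring/R‑BLP‑type tool is indispensable, and one must organize the application so that \emph{no} generalized Kantorovich constant is produced, i.e. so that the multiplicative constant is literally $1$; this is exactly where the hypothesis $p\ge\tfrac12$ (equivalently $t\le-\tfrac12$) enters, matching the requirement $r=2p\ge 1$ in Lemma~\ref{thm-RBLP}. Equivalently this second estimate can be phrased as the log‑majorization $A^p(A\sharp_p B)A^p\prec_{\log}(A^{1+p}B^{2p}A^{1+p})^{1/2}$. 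Combining the two norm bounds yields $\NORM{A\hat{\spe}_t B}\le\NORM{A^{1-t}B^t}$ for all $t\in[-1,-\tfrac12]$, and the compound‑matrix reduction of the first step then upgrades this to the claimed log‑majorization.
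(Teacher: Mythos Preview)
Your reduction via compound matrices and homogeneity is correct, and your first R\mbox{-}BLP step coincides verbatim with the paper's: both arrive at
\[
\NORM{A\hat{\spe}_t B}\le \NORM{A^{\frac12-t}\bigl(A^{-1/2}B^{-1}A^{-1/2}\bigr)^{-t}A^{\frac12-t}}.
\]

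The gap is in what follows. You do not actually prove the remaining inequality $\NORM{A^{p+1/2}C^{p}A^{p+1/2}}\le\NORM{A^{1+p}B^{p}}$ (with $p=-t$, $C=A^{-1/2}BA^{-1/2}$); you only sketch a plan to reapply Lemma~\ref{thm-RBLP} with $r=2p$ and explicitly flag it as ``the main obstacle.'' That plan does not go through as stated: if you match the outer $A$\mbox{-}powers by taking $\tilde A=A$ and $\tilde B=C^{1/2}$ in R\mbox{-}BLP with $r=2p$, the inequality points the wrong way (it gives a \emph{lower} bound for $\NORM{A^{p+1/2}C^{p}A^{p+1/2}}$), and the alternative choices that make your quantity the right-hand side of R\mbox{-}BLP introduce awkward $A^{1/2}(\cdots)A^{1/2}$ factors that do not reduce to $\NORM{A^{1+p}B^{p}}$.

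The paper closes this gap not with a second R\mbox{-}BLP but with two applications of the Araki--Lieb--Thirring inequality $\NORM{X^{\alpha}Y^{\alpha}X^{\alpha}}\le\NORM{XYX}^{\alpha}$ for $0\le\alpha\le 1$ (this is \eqref{cor4.8_eq00}). First, with $\alpha=-t\in[\tfrac12,1]$, $X=A^{(2t-1)/(2t)}$, $Y=A^{-1/2}B^{-1}A^{-1/2}$, one gets
\[
\NORM{A^{\frac12-t}\bigl(A^{-1/2}B^{-1}A^{-1/2}\bigr)^{-t}A^{\frac12-t}}
\le\NORM{A^{\frac{t-1}{2t}}B^{-1}A^{\frac{t-1}{2t}}}^{-t};
\]
second, with $\alpha=-\tfrac{1}{2t}\in[\tfrac12,1]$, $X=A^{1-t}$, $Y=B^{2t}$, one gets
\[
\NORM{A^{\frac{t-1}{2t}}B^{-1}A^{\frac{t-1}{2t}}}^{-t}\le\NORM{A^{1-t}B^{2t}A^{1-t}}^{1/2}.
\]
Both ranges of $\alpha$ are exactly where the hypothesis $-1\le t\le-\tfrac12$ is used. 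This is the missing idea in your proposal; once you replace the vague ``second R\mbox{-}BLP'' step by these two Araki moves, your argument becomes complete and is then essentially identical to the paper's.
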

\begin{proof}
It suffices to prove the following norm inequality:
\begin{align*}
\NORM{A \hat \spe_t B} & = \NORM{A^{1/2} (A\ \s\   B^{-1})^{-2t} A^{1/2}} \\
& \leq \NORM{A^{1/2}A^{-t}(A^{1/2}BA^{1/2})^{t}A^{-t}A^{1/2}} \quad \mbox{(by $r=-2t, s=1$ in Lemma~\ref{thm-RBLP})}\\
& = \NORM{A^{\frac{1}{2}-t}(A^{-1/2}B^{-1}A^{-1/2})^{-t}A^{\frac{1}{2}-t}} \\
& \leq \NORM{A^{\frac{2t-1}{2t}}(A^{-1/2}B^{-1}A^{-1/2})A^{\frac{2t-1}{2t}}}^{-t} \\
& = \NORM{A^{\frac{t-1}{2t}}B^{-1}A^{\frac{t-1}{2t}}}^{-t} \quad \mbox{(by $\frac{1}{2}\leq -t \leq 1$)}\\
& \leq\NORM{A^{1-t}B^{2t}A^{1-t}}^{\frac{1}{2}}\quad \mbox{(by $\frac{1}{2}\leq -\frac{1}{2t} \leq 1$)}.
\end{align*}
\end{proof}

$Q_{a,z}(A,B):=\left(A^{\frac{1-a}{2z}}B^{\frac{a}{z}}A^{\frac{1-a}{2z}}\right)^z$ for $a,z >0$ is often called R\'enyi mean \cite{DF2024}.  In the following, we consider the case $a:=t$ and $z:=t$ such that $Q_{t,t}(A,B)=\left(A^{\frac{1-t}{2t}}BA^{\frac{1-t}{2t}}\right)^t$ for $t\in \mathbb{R}$ as  R\'enyi type mean.
From \cite[Corollary 8]{DF2024}, we have 
$$
\left(A^{\frac{1-t}{2t}}BA^{\frac{1-t}{2t}}\right)^t\prec_{\log}  A \spe_t B,\quad \left(\frac{1}{2}\le t \le 1\right)
$$
for positive definite matrices $A$ and $B$. The following result gives us a counterpart $\left(0<t\le 1/2\right)$ and a negative part $\left(-1\le t <0\right)$ for the above.

\begin{theorem}\label{cor4.8}
Let $A$ and $B$ be positive definite matrices. If $-1\leq t \le \frac12$ with $t\neq 0$, then
$$
A \hat \spe_t B \prec_{\log}  \left(A^{\frac{1-t}{2t}}BA^{\frac{1-t}{2t}}\right)^t.
$$
\end{theorem}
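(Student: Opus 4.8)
The strategy is to mirror the method used in Theorem~\ref{theorem4.7}, namely to reduce the log-majorization to a single operator norm inequality via the homogeneity and the determinant identity, and then to establish that norm inequality by a chain of applications of the L\"owner--Heinz inequality together with the R-BLP inequality of Lemma~\ref{thm-RBLP}. First I would observe, exactly as in the proofs of Theorem~\ref{theorem3.1} and Theorem~\ref{theorem4.7}, that both $A\hat\spe_t B$ and $\left(A^{\frac{1-t}{2t}}BA^{\frac{1-t}{2t}}\right)^t$ are homogeneous of the same order $(1-t,t)$ in $(A,B)$, and that their determinants agree, both being $(\det A)^{1-t}(\det B)^t$. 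Passing to the $k$-th compound matrices $C_k$ and using $C_k(A\hat\spe_t B)=C_k(A)\hat\spe_t C_k(B)$ together with the analogous multiplicativity for the R\'enyi-type mean, the claim reduces to the single implication
\[
\left(A^{\frac{1-t}{2t}}BA^{\frac{1-t}{2t}}\right)^t\le I\quad\Longrightarrow\quad A\hat\spe_t B\le I
\]
for $-1\le t\le \tfrac12$, $t\neq 0$, or equivalently to the norm inequality $\NORM{A\hat\spe_t B}\le \NORM{\left(A^{\frac{1-t}{2t}}BA^{\frac{1-t}{2t}}\right)^t}$.

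For the negative range $-1\le t<0$ I would start from the representation $A\hat\spe_t B=A^{1/2}(A\,\s\,B^{-1})^{-2t}A^{1/2}$ (using $A^{-1}\s B=(A\s B^{-1})^{-1}$), write $A\,\s\,B^{-1}=A^{1/2}(A^{-1/2}B^{-1}A^{-1/2})^{1/2}A^{1/2}$, and apply Lemma~\ref{thm-RBLP} with $r=-2t\ge s=1$ (legitimate since $-2t\in[1,2]\supset[1,\infty)$-admissible values here are $\ge 1$) to bound
\[
\NORM{A^{1/2}(A\,\s\,B^{-1})^{-2t}A^{1/2}}\le \NORM{A^{1/2}A^{-t}(A^{1/2}B A^{1/2})^{t}A^{-t}A^{1/2}}
=\NORM{A^{\frac12-t}(A^{-1/2}B^{-1}A^{-1/2})^{-t}A^{\frac12-t}},
\]
which is exactly the first two steps of the proof of Theorem~\ref{theorem4.7}. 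Now I would diverge: instead of passing to $B^{2t}$, I would use the operator monotonicity/concavity-type bound $X^{-t}\le (\text{suitable power bound})$ coming from L\"owner--Heinz with exponent $-t\in(0,1]$ to replace $(A^{-1/2}B^{-1}A^{-1/2})^{-t}$ by a $-t$-power of $A^{\frac{2t-1}{2t}}(A^{-1/2}B^{-1}A^{-1/2})A^{\frac{2t-1}{2t}}$ after collecting the $A$ exponents; concretely $A^{\frac12-t}(A^{-1/2}B^{-1}A^{-1/2})^{-t}A^{\frac12-t}=A^{\frac12-t}\big(A^{t-\frac12}\cdot A^{\frac12-t}(A^{-1/2}B^{-1}A^{-1/2})^{-t}A^{\frac12-t}\cdot A^{t-\frac12}\big)A^{\frac12-t}$, and one uses $\NORM{X^{-t}}=\NORM{X}^{-t}$ to pull the exponent out, landing on $\NORM{A^{\frac{2t-1}{2t}}(A^{-1/2}B^{-1}A^{-1/2})A^{\frac{2t-1}{2t}}}^{-t}$, i.e. $\NORM{A^{\frac{t-1}{2t}}B^{-1}A^{\frac{t-1}{2t}}}^{-t}$. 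Finally, since $A^{\frac{1-t}{2t}}BA^{\frac{1-t}{2t}}$ and $A^{\frac{t-1}{2t}}B^{-1}A^{\frac{t-1}{2t}}$ are inverses of each other, $\NORM{A^{\frac{t-1}{2t}}B^{-1}A^{\frac{t-1}{2t}}}^{-t}\le \NORM{\left(A^{\frac{1-t}{2t}}BA^{\frac{1-t}{2t}}\right)^{t}}$ whenever $A^{\frac{1-t}{2t}}BA^{\frac{1-t}{2t}}\le I$, closing the argument for $-1\le t<0$.

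For the range $0<t\le\tfrac12$ I would use the involution identity $A\hat\spe_t B=B\hat\spe_{1-t}A$ together with the corresponding identity $\left(A^{\frac{1-t}{2t}}BA^{\frac{1-t}{2t}}\right)^t$ versus the $(1-t)$-version in the variable $B$; since $1-t\in[\tfrac12,1)$, the already-known result $\left(A^{\frac{1-s}{2s}}BA^{\frac{1-s}{2s}}\right)^s\prec_{\log}A\spe_s B$ for $s\in[\tfrac12,1]$ (from \cite[Corollary 8]{DF2024}, quoted just before the statement) applies with $s=1-t$ and $A,B$ swapped, giving $A\hat\spe_t B=B\hat\spe_{1-t}A$ dominated in log-majorization by the required R\'enyi-type mean after unwinding the exponents. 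I expect the main obstacle to be the bookkeeping of the fractional exponents of $A$ in the middle step of the negative case — making sure that after collecting powers the exponent on the outer $A$'s is precisely $\frac{t-1}{2t}$ and that every application of L\"owner--Heinz uses an exponent genuinely in $[0,1]$ (one must check $-t\in(0,1]$, $-\tfrac1{2t}\in[\tfrac12,1]$, and $\frac{2t-1}{2t}\cdot(-t)$-type combinations land correctly, using $-1\le t\le-\tfrac12$ resp.\ $-\tfrac12\le t<0$ in the two sub-ranges), and that the hypotheses $r\ge s\ge 1$ of Lemma~\ref{thm-RBLP} are met throughout. The determinant and homogeneity reductions are routine, and the $0<t\le\tfrac12$ case is essentially a citation once the involution identity is invoked.
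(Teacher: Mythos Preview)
Your argument for the sub-range $-1\le t\le -\tfrac12$ is essentially the paper's: both extract from the chain in Theorem~\ref{theorem4.7} the intermediate bound $\NORM{A\hat\spe_t B}\le \NORM{A^{\frac{t-1}{2t}}B^{-1}A^{\frac{t-1}{2t}}}^{-t}$, and since $(A^{\frac{1-t}{2t}}BA^{\frac{1-t}{2t}})^t=(A^{\frac{t-1}{2t}}B^{-1}A^{\frac{t-1}{2t}})^{-t}$, the right-hand side is exactly $\NORM{(A^{\frac{1-t}{2t}}BA^{\frac{1-t}{2t}})^t}$ (your ``$\le$'' in that last step is in fact an equality, and no hypothesis on $A^{\frac{1-t}{2t}}BA^{\frac{1-t}{2t}}$ is needed).

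There are, however, two genuine gaps. First, you claim to cover all of $-1\le t<0$ with this chain, but the R-BLP step (Lemma~\ref{thm-RBLP} with $r=-2t$, $s=1$) requires $-2t\ge 1$, i.e.\ $t\le-\tfrac12$; your own parenthetical ``$-2t\in[1,2]$'' confirms this restriction, and no separate argument is offered for $-\tfrac12<t<0$. The paper treats this sub-range differently: it uses the Araki--Lieb--Thirring inequality $\NORM{X^pY^pX^p}\le\NORM{(XYX)^p}$ (for $0\le p\le 1$) with $X=A^{\frac{t-1}{2t}}$, $Y=B^{-1}$, $p=-t$ to obtain $A^{\frac{1-t}{2}}B^tA^{\frac{1-t}{2}}\prec_{\log}(A^{\frac{1-t}{2t}}BA^{\frac{1-t}{2t}})^t$, and then chains this with Theorem~\ref{theorem3.1}.

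Second, your argument for $0<t\le\tfrac12$ does not work. The result from \cite{DF2024} quoted just before the theorem reads $\left(A^{\frac{1-s}{2s}}BA^{\frac{1-s}{2s}}\right)^s\prec_{\log}A\spe_s B$ for $s\in[\tfrac12,1]$, which is the \emph{opposite} direction from what you need: with $s=1-t$ and $A,B$ swapped it tells you that $A\spe_t B=B\spe_{1-t}A$ \emph{dominates} a R\'enyi-type mean, not that it is dominated. Moreover, $Q_{t,t}(A,B)=(A^{\frac{1-t}{2t}}BA^{\frac{1-t}{2t}})^t$ is not the same object as $Q_{1-t,1-t}(B,A)=(B^{\frac{t}{2(1-t)}}AB^{\frac{t}{2(1-t)}})^{1-t}$, so ``unwinding the exponents'' cannot recover the target even if the direction were right. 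The paper instead argues directly: writing $\NORM{A\spe_t B}=\NORM{A^{1/2}(A^{-1}\s B)^{2t}A^{1/2}}$, it applies $\NORM{X^pY^pX^p}\le\NORM{(XYX)^p}$ with $p=2t\le 1$, expands $A^{-1}\s B=A^{-1/2}(A^{1/2}BA^{1/2})^{1/2}A^{-1/2}$, and then applies $\NORM{(XYX)^2}\le\NORM{X^2Y^2X^2}$ to land on $\NORM{(A^{\frac{1-t}{2t}}BA^{\frac{1-t}{2t}})^t}$.
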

\begin{proof}
From the process of the proof in Theorem \ref{theorem4.7} and $\NORM{X^p}=\NORM{X}^p$ for $p>0$ and $X>0$, it holds
\begin{equation}\label{cor4.8_eq01}
 A \hat \spe_t B \prec_{\log}  \left(A^{\frac{1-t}{2t}}BA^{\frac{1-t}{2t}}\right)^t,\quad \left(-1\leq t \leq -\frac{1}{2}\right).
 \end{equation}
 The following inequalities are known \cite[Theorem IX.2.10]{B1996}:
\begin{equation}\label{cor4.8_eq00}
\NORM{ X^p Y^p X^p } \le \NORM{ (XYX)^p }, \quad (0\le p\le 1)
 \end{equation}
 and
 \begin{equation}\label{cor4.8_eq001}
 \NORM{ X^p Y^p X^p } \ge \NORM{ (XYX)^p }, \quad (p\ge1)
  \end{equation}
for positive definite matrices $X,Y$. From this, we get
$$
\NORM{ \left(A^{\frac{t-1}{2t}}\right)^{-t} \left(B^{-1}\right)^{-t} \left(A^{\frac{t-1}{2t}}\right)^{-t}}
\le \NORM{\left(A^{\frac{t-1}{2t}}B^{-1}A^{\frac{t-1}{2t}}\right)^{-t}},\quad (-1\le t <0)
$$
which is equivalent to
$$
\NORM{A^{\frac{1-t}{2}}B^tA^{\frac{1-t}{2}}}\le 
\NORM{\left(A^{\frac{1-t}{2t}}BA^{\frac{1-t}{2t}}\right)^t},\quad (-1\le t <0)
$$
which implies 
$$
A^{\frac{1-t}{2}}B^tA^{\frac{1-t}{2}} \prec_{\log} \left(A^{\frac{1-t}{2t}}BA^{\frac{1-t}{2t}}\right)^t,\quad (-1\le t <0)
$$
since $A^{\frac{1-t}{2}}B^tA^{\frac{1-t}{2}}$ and $\left(A^{\frac{1-t}{2t}}BA^{\frac{1-t}{2t}}\right)^t$ are homogeneous and they have same determinants.
%$\det\left(A^{\frac{1-t}{2}}B^tA^{\frac{1-t}{2}}\right)=\det\left(\left(A^{\frac{1-t}{2t}}BA^{\frac{1-t}{2t}}\right)^t\right)$.
This with Theorem \ref{theorem3.1}, we obtain
\begin{equation}\label{cor4.8_eq02}
A \hat{\spe}_t B \prec_{\log} A^{\frac{1-t}{2}}B^{t}A^{\frac{1-t}{2}} \prec_{\log} \left(A^{\frac{1-t}{2t}}BA^{\frac{1-t}{2t}}\right)^t,\quad \left(-\frac12\le t <0\right).
 \end{equation}
For $0< t \le \dfrac12$,
\begin{align*}
\NORM{A {\spe}_t B} & = \NORM{A^{\frac{1}{2}}(A^{-1}\ \s\ B)^{2t}A^{\frac{1}{2}}}\\
& \leq \NORM{\left\{A^{\frac{1}{4t}}(A^{-1}\ \s\ B)A^{\frac{1}{4t}}\right\}^{2t}}\qquad \left({\rm by}\,\,\,\eqref{cor4.8_eq00}\,\,\,{\rm and}\,\,\, 0\leq 2t\leq 1\right)\\
& =\NORM{\left\{A^{\frac{1-2t}{4t}}(A^{\frac{1}{2}}BA^{\frac{1}{2}})^{\frac{1}{2}}A^{\frac{1-2t}{4t}}\right\}^{2t}}\\
& \leq \NORM{(A^{\frac{1-t}{2t}}BA^{\frac{1-t}{2t}})^t}\qquad \left({\rm by}\,\,\,\eqref{cor4.8_eq001}\,\,\,{\rm with} \,\,p:=2\right)\\
\end{align*}
which shows
$$
A \spe_t B \prec_{\log} \left(A^{\frac{1-t}{2t}}BA^{\frac{1-t}{2t}}\right)^t,\quad \left(0< t \le \frac12\right).
$$
 From this with \eqref{cor4.8_eq01} and \eqref{cor4.8_eq02}, the proof is complete. 
\end{proof}
%\begin{remark}
%From the process of the proof in Theorem \ref{theorem4.7} and $\NORM{X^p}=\NORM{X}^p$ for $p>0$ and $X>0$, it holds
%$$
% A \hat \spe_t B \prec_{\log}  \left(A^{\frac{1-t}{2t}}BA^{\frac{1-t}{2t}}\right)^t,\quad \left(-1\leq t \leq -\frac{1}{2}\right).
%$$
%On the other hand, we have
%$$
%\left(A^{\frac{1-t}{2t}}BA^{\frac{1-t}{2t}}\right)^t\prec_{\log}  A \spe_t B,\quad \left(\frac{1}{2}\le t \le 1\right)
%$$
%from \cite[Corollary 8]{DF2024}.
%\end{remark}

\begin{remark}
By the similar way to the proof in Theorem \ref{cor4.8}, we have
$$
\NORM{ \left(A^{\frac{a-1}{2z}}\right)^{-z} \left(B^{-\frac{a}{z}}\right)^{-z} \left(A^{\frac{a-1}{2z}}\right)^{-z}}
\le \NORM{\left(A^{\frac{a-1}{2z}}B^{-\frac{a}{z}}A^{\frac{a-1}{2z}}\right)^{-z}},\quad (-1\le z \le\dfrac12,\,z\neq 0,\,\,a\in\mathbb{R})
$$
for positive definite matrices $A$ and $B$. This is equivalent to the inequality
$$
\NORM{ A^{\frac{1-a}{2}}B^{a}A^{\frac{1-a}{2}}}
\le \NORM{\left(A^{\frac{1-a}{2z}}B^{\frac{a}{z}}A^{\frac{1-a}{2z}}\right)^{z}},\quad (-1\le z \le\dfrac12,\,z\neq 0,\,\,a\in\mathbb{R})
$$
which implies 
$$
A^{\frac{1-a}{2}}B^{a}A^{\frac{1-a}{2}} \prec_{\log} Q_{a,z}(A,B),\quad (-1\le z \le\frac12,\,\,z\neq 0, \,\,\,a\in\mathbb{R}).
$$
From Theorem \ref{theorem3.1}, we have
$$
A\hat\spe_a B \prec_{\log} Q_{a,z}(A,B),\quad \left(z\in [-1,0)\cup(0,1/2],\,\,a \in [-1/2,0] \cup [1,3/2]\right).
$$
\end{remark}

\par
\medskip
\medskip

Finally, we show the relation among three quantum Tsallis relative entropies: 

\begin{corollary}\label{corollary3.2}
Let  $A$ and $B$ be positive definite matrices  and $t\neq 0$.
If $-\frac12\le t \le 1$, then
\begin{equation}\label{corollary3.2_eq01}
-\tr\,T^{\spe}_t(A|B)\le D_t(A|B) \le -\tr\, T_t(A|B).
\end{equation}
If $1\le t \le \frac32$, then the revrsed inequalities for \eqref{corollary3.2_eq01} hold.
\end{corollary}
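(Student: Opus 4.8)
The plan is to reduce Corollary~\ref{corollary3.2} to a chain of scalar trace inequalities among $\tr(A\hat{\spe}_t B)$, $\tr(A^{1-t}B^t)$ and $\tr(A\hat{\sharp}_t B)$, and then to divide through by $t$ while watching its sign. First I would record, straight from the definitions of the three entropies together with cyclicity of the trace (which gives $\tr(A^{1-t}B^t)=\tr(A^{\frac{1-t}{2}}B^tA^{\frac{1-t}{2}})$),
\[
-\tr T^{\spe}_t(A|B)=\frac{\tr A-\tr(A\hat{\spe}_t B)}{t},\quad D_t(A|B)=\frac{\tr A-\tr(A^{1-t}B^t)}{t},\quad -\tr T_t(A|B)=\frac{\tr A-\tr(A\hat{\sharp}_t B)}{t}.
\]
So everything comes down to ordering these three traces, together with the standard fact that $X\prec_{\log} Y$ for positive semidefinite $X,Y$ forces $\tr X\le\tr Y$ (it follows from the $\log$-eigenvalue vectors being majorized and convexity of $s\mapsto e^{s}$).

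For $0<t\le 1$ nothing new is needed: the asserted inequality \eqref{corollary3.2_eq01} is precisely the already established \eqref{sec3_eq01}. Thus the genuine content lies in the ranges $-\frac12\le t<0$ and $1\le t\le\frac32$.

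For $-\frac12\le t<0$ I would combine Theorem~\ref{theorem3.1}, which gives $A\hat{\spe}_t B\prec_{\log} A^{\frac{1-t}{2}}B^tA^{\frac{1-t}{2}}$, with Proposition~\ref{prop_known_fact}(iv) applied at $r=t\in[-1,0]$, which gives $A^{\frac{1-t}{2}}B^tA^{\frac{1-t}{2}}\prec_{\log} A\hat{\sharp}_t B$. Passing to traces yields $\tr(A\hat{\spe}_t B)\le\tr(A^{1-t}B^t)\le\tr(A\hat{\sharp}_t B)$; subtracting each term from $\tr A$ and dividing by the \emph{negative} number $t$ reverses these two inequalities, which is exactly \eqref{corollary3.2_eq01}. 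For $1\le t\le\frac32$ the same two ingredients apply: Theorem~\ref{theorem3.1} again gives $A\hat{\spe}_t B\prec_{\log} A^{\frac{1-t}{2}}B^tA^{\frac{1-t}{2}}$, and Proposition~\ref{prop_known_fact}(ii) at $r=t\in[1,2]$ gives $A^{\frac{1-t}{2}}B^tA^{\frac{1-t}{2}}\prec_{\log} A\hat{\sharp}_t B$, so once more $\tr(A\hat{\spe}_t B)\le\tr(A^{1-t}B^t)\le\tr(A\hat{\sharp}_t B)$ --- but now $t>0$, so dividing $\tr A-(\cdot)$ by $t$ preserves the order, which swaps the roles of $T^{\spe}_t$ and $T_t$ and delivers exactly the reversed form of \eqref{corollary3.2_eq01}.

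The argument is essentially bookkeeping, and I do not expect a real obstacle once Theorem~\ref{theorem3.1} and Proposition~\ref{prop_known_fact} are in hand. The only two points that require care are lining up the subintervals $[-\frac12,0]$ and $[1,\frac32]$ with the exact validity ranges of those results, and --- the feature that actually flips the direction of the inequalities on $[1,\frac32]$ as compared with $[-\frac12,0)$ --- the sign of $t$ when one divides by it; it is also worth noting explicitly the passage from $\log$-majorization to the trace inequality, which is used tacitly.
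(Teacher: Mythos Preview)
Your proof is correct and follows essentially the same route as the paper: reduce to the trace chain $\tr(A\hat\spe_t B)\le\tr(A^{1-t}B^t)\le\tr(A\hat\sharp_t B)$ via Theorem~\ref{theorem3.1} and Proposition~\ref{prop_known_fact}, then divide by $t$ with the appropriate sign. The only cosmetic difference is that for $-\tfrac12\le t<0$ the paper invokes \cite[Theorem~2.2]{S2019} directly for the trace inequality $\tr A^{1-t}B^t\le\tr A\hat\sharp_t B$, whereas you obtain it from the stronger log-majorization in Proposition~\ref{prop_known_fact}(iv); either works.
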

\begin{proof}
For the case $0<t\le 1$, it is known in \eqref{sec3_eq01}. From Theorem \ref{theorem3.1}, we have
$\tr\,A\hat \spe_t B \le \tr\,A^{1-t}B^t$ for $-\frac12\le t \le 0$. % under the assumption $A\le I$. 
Also the trace inequality  $\tr\,A^{1-t}B^t \le \tr\,A\hat{\sharp}_t B$ for $-1\le t <0$ was shown in \cite[Theorem 2.2.]{S2019}. Thus we have the inequalities \eqref{corollary3.2_eq01} for $-\frac12\le t <0$.

If $1\le t \le \frac32$, then $\tr A\hat \spe_t B \le \tr A^{1-t}B^t \le \tr A\hat \sharp_t B$ by Proposition \ref{prop_known_fact} (ii) and Theorem \ref{theorem3.1}. Thus we get the desired result.
\end{proof}

\begin{remark}
For any matrix $X$ and positive definite matrices $A,B$, the quasi--entropy (quantum $f$--divergence) \cite{HP2014,P1986} was introduced by
$$
S_f^X(A|B)=\tr\, X^* \mathcal P_f(L_A,R_B)X,\qquad \mathcal P_f(L_A,R_B):=f\left(L_A R_B^{-1}\right)R_B,
$$
where the perspective $\mathcal P_f(A,B):=B^{1/2}f\left(B^{-1/2}AB^{-1/2}\right)B^{1/2}$ is defined for $f:(0,\infty)\to\mathbb{R}$. Here two multiplication operators $L_A(X):=AX$ and $R_B(X):=XB$ are commutative.
The maximal $f$--divergence \cite{HM2017,PR1998} was also introduced as
$$
\hat{S}_f(A|B):=\tr Bf\left(B^{-1/2}AB^{-1/2}\right). 
$$
If $f:(0,\infty)\to \mathbb{R}$ is operator convex, then the following inequality is known \cite{M2013,HM2017}:
\begin{equation}\label{MHM_ineq}
S_f^I(A|B)\le \hat{S}_f(A|B).
\end{equation}

Since the function $x^{1-t}\ln_tx=\dfrac{x-x^{1-t}}{t}$ for $-1\le t \le 1,\,\,(t\neq 0)$  is operator convex, we obtain the second inequality in \eqref{corollary3.2_eq01} as a special case of the inequality \eqref{MHM_ineq}. However, to the best of our knowledge, we can not obtain the first inequality from the known results.
\end{remark}

Taking the limit $t\to 0$ in Corollary \ref{corollary3.2}, we have the following corollary.
\begin{corollary}\label{corollary4.7}
For positive definite matrices $A$ and $B$, 
\begin{equation*}%\label{bounds_relative_entropy}
-\tr\,A\log \left(A^{-1}\sharp B\right)^2 \le S(A|B) \le -\tr\, A\log \left(A^{-1/2}BA^{-1/2}\right),
\end{equation*}
where $S(A|B):=\tr\, \left(A\log A-A\log B\right)$ is the Umegaki relative entropy \cite{Umegaki}. 
\end{corollary}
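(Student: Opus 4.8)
The plan is to invoke Corollary~\ref{corollary3.2} with $t$ ranging over $(0,1]$ (equivalently over $[-\tfrac12,0)$), and then to let $t\to 0$, using that a chain of weak inequalities between real numbers is preserved in the limit. Concretely, I would show that each of the three quantities
\[
-\tr\,T^{\spe}_t(A|B),\qquad D_t(A|B),\qquad -\tr\,T_t(A|B)
\]
has a limit as $t\to 0$, and identify those limits with $-\tr\,A\log(A^{-1}\sharp B)^2$, $S(A|B)$, and $-\tr\,A\log(A^{-1/2}BA^{-1/2})$ respectively. Since $A$ and $B$ are fixed positive definite matrices, every functional calculus below takes place on a common compact subinterval of $(0,\infty)$ containing the spectra of $A$, $A^{-1/2}BA^{-1/2}$ and $A^{-1}\sharp B$, so all the convergences are uniform and cause no trouble.

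For the middle term, I would set $f(t):=A^{1-t}B^t$, which is norm-differentiable in $t$ with $f(0)=A$ and, by the product rule applied factorwise (so non-commutativity is irrelevant),
\[
f'(0)=\Bigl(\tfrac{d}{dt}A^{1-t}\Bigr)_{t=0}B^0+A\Bigl(\tfrac{d}{dt}B^{t}\Bigr)_{t=0}=-A\log A+A\log B.
\]
Hence $\lim_{t\to0}D_t(A|B)=\lim_{t\to0}\tr\dfrac{A-f(t)}{t}=-\tr f'(0)=\tr(A\log A-A\log B)=S(A|B)$.

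For the right-hand term, recall $T_t(A|B)=A^{1/2}\ln_t\!\left(A^{-1/2}BA^{-1/2}\right)A^{1/2}$ with $\ln_t x=\dfrac{x^t-1}{t}\to\log x$ as $t\to0$; thus $T_t(A|B)\to A^{1/2}\log(A^{-1/2}BA^{-1/2})A^{1/2}$, and by cyclicity of the trace $-\tr\,T_t(A|B)\to-\tr\,A\log(A^{-1/2}BA^{-1/2})$. For the left-hand term, put $C:=A^{-1}\sharp B>0$, so that $A\hat\spe_tB=C^tAC^t=e^{t\log C}Ae^{t\log C}$, which is differentiable with value $A$ at $t=0$ and derivative $(\log C)A+A(\log C)$ there; therefore
\[
\lim_{t\to0}T^{\spe}_t(A|B)=\lim_{t\to0}\frac{C^tAC^t-A}{t}=(\log C)A+A(\log C),
\]
and using cyclicity of the trace together with $\log(C^2)=2\log C$ we get $-\tr\,T^{\spe}_t(A|B)\to-2\tr\,A\log C=-\tr\,A\log(A^{-1}\sharp B)^2$.

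Finally, letting $t\to0^+$ in the inequality $-\tr\,T^{\spe}_t(A|B)\le D_t(A|B)\le-\tr\,T_t(A|B)$ of Corollary~\ref{corollary3.2} delivers the claimed two inequalities. There is no real obstacle here; the only points that require a little attention are the matrix differentiations at $t=0$ (which are handled termwise, so commutativity plays no role) and the repeated appeal to cyclicity of the trace to rewrite $A^{1/2}(\cdot)A^{1/2}$ and $(\log C)A+A\log C$ in the stated form.
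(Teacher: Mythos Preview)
Your proof is correct and follows exactly the same approach as the paper: the paper's entire argument is the single sentence ``Taking the limit $t\to 0$ in Corollary~\ref{corollary3.2}, we have the following corollary.'' You have supplied the explicit computation of each of the three limits, which the paper leaves to the reader, but the strategy is identical.
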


The above inequalities give us the upper and lower bounds for the Umegaki relative entropy $S(A|B)$. If $A$ and $B$ are commutative, then the both sides are equal to the relative entropy.

\section{Concluding remarks}
We have studied the mathematical properties and inequalities for the weighted spectral geometric mean. As an application, we give a new lower bound of the Tsallis relative entropy by defining the Tsallis relative operator entropy due to the weighted spectral geometric mean.

The upper bound $-\tr\, A\log \left(A^{-1/2}BA^{-1/2}\right)$ in Corollary \ref{corollary4.7} is often called as the Belavkin--Staszewski \cite{BS1982} or the Fujii-Kamei relative entropy \cite{FK1989} for positive definite matrices $A$ and $B$. The second inequality  in Corollary \ref{corollary4.7}  is known in \cite{HP1991,HP1993}.
On the other hand, the lower bound in Corollary \ref{corollary4.7} coincides to the Ando--Hiai inequality \cite[Theorem 5.2]{AH1994} with $p=1$ and $B:=B^{-1}$:
$$
\tr A(\log A+\log B) \ge \frac{1}{p}\tr A\log(A^p\sharp B^p)^2,\,\,(A,B\ge 0,\,\,p>0).
$$

Closing this paper, we give discussions on the lower bound of the Umegaki relative entropy.
It is known that the Umegaki relative entropy has a lower bound, which is so--called Pinsker inequality, \cite[Theorem 9.17]{AF2001} and \cite[Example 3.4]{Petz2008} such as
$$
S(A|B) \ge \frac{1}{2} \tr \left[|A-B|\right]^2 
$$
for density matrices $A$ and $B$, where $|X|:=(X^*X)^{1/2}$. 
It is also known \cite{HP1993} that the Umegaki relative entropy has an another lower bound
$$S(A|B) \ge \tr A\log B^{-1/2}AB^{-1/2}$$
for positive definite matrices $A,B$.
So we may have an interest in the ordering among $\frac{1}{2} \tr \left[|A-B|\right]^2$, $\tr A\log B^{-1/2}AB^{-1/2}$ and $\tr\,A\log \left(A\sharp B^{-1}\right)^2$ for density matrices $A,B$. Then we have the following numerical examples.

%It is natural to compare two lower bounds of the relative entropy.
%We give a simple example such that 
%\begin{equation}\label{lower_bounds_relative_entropy}
%S(A|B) \ge \tr\,A\log \left(A\sharp B^{-1}\right)^2 \ge \frac{1}{2} \tr \left[|A-B|\right]^2.
%\end{equation}
%Take 
%\[A: =\frac17 \left( {\begin{array}{*{20}{c}}
%2&2\\
%2&5
%\end{array}} \right),\,\,B: = \frac14\left( {\begin{array}{*{20}{c}}
%2&{ - 1}\\
%{ - 1}&2
%\end{array}} \right).\]
%Then we have
%$$
%S(A|B)\simeq 0.740761,\quad \tr\,A\log \left(A\sharp B^{-1}\right)^2 \simeq 0.728114,\quad
% \frac{1}{2} \tr \left[|A-B|\right]^2 \simeq 0.665816.
%$$
%This numerical example supports that our lower bound $-\tr\,A\log \left(A^{-1}\sharp B\right)^2$ of $S(A|B)$ is better than the known one $\frac{1}{2} \tr \left[|A-B|\right]^2$.
%In addition, the equalities in \eqref{bounds_relative_entropy} hold when $A$ and $B$ are commutative.
%This also supports the advantage for our lower bound.
%\par
%\medskip

\begin{enumerate}
\item[(i)] For two density matrices $A: =\dfrac17 \left( {\begin{array}{*{20}{c}}
2&2\\
2&5
\end{array}} \right)\,\,{\rm and}\,\,\,B: = \dfrac14\left( {\begin{array}{*{20}{c}}
2&{ 1}\\
{ 1}&2
\end{array}} \right)$,
$$\tr\,A\log \left(A\sharp B^{-1}\right)^2 \simeq 0.10285 \ge \tr A\log B^{-1/2}AB^{-1/2} \simeq 0.0984248 \ge \frac{1}{2} \tr \left[|A-B|\right]^2 \simeq 0.0943878.$$

\item[(ii)] For two density matrices $A: =\dfrac17 \left( {\begin{array}{*{20}{c}}
2&2\\
2&5
\end{array}} \right)\,\,{\rm and}\,\,\,B: = \dfrac14\left( {\begin{array}{*{20}{c}}
2&{ -1}\\
{ -1}&2
\end{array}} \right)$,
$$\tr A\log B^{-1/2}AB^{-1/2}{\simeq 0.728623}  \ge \tr\,A\log \left(A\sharp B^{-1}\right)^2{\simeq 0.728114} \ge \frac{1}{2} \tr \left[|A-B|\right]^2{\simeq 0.665816}.$$

\item[(iii)] For two density matrices $A: =\dfrac17 \left( {\begin{array}{*{20}{c}}
2&2\\
2&5
\end{array}} \right)\,\,{\rm and}\,\,\,B: = \dfrac{1}{10}\left( {\begin{array}{*{20}{c}}
1&{ 0}\\
{ 0}&9
\end{array}} \right)$,
$$\tr\,A\log \left(A\sharp B^{-1}\right)^2{\simeq 0.260302}\ge \frac{1}{2} \tr \left[|A-B|\right]^2{\simeq 0.232245} \ge \tr A\log B^{-1/2}AB^{-1/2}{\simeq 0.227952}.$$
\end{enumerate}

From the above numerical computations, we may state that there is no ordering among three bounds.
However, we have not found an example such that $\dfrac{1}{2} \tr \left[|A-B|\right]^2 \ge \tr\,A\log \left(A\sharp B^{-1}\right)^2$. \par
\medskip

\textbf{Acknowledgements.}  
The authors would like to thank the referees for their careful and insightful comments to improve our manuscript.
The authors are partially supported by JSPS KAKENHI Grant Number JP21K03341 and JP23K03249, respectively.

\par
\medskip


\begin{thebibliography}{99} 
\bibitem{AF2001} R. Alicki and M. Fannes, {\it Quantum dynamical systems}, Oxford University Press, 2001.

 \bibitem{A1979} T. Ando, {\it Concavity of certain maps on positive definite matrices and applications to Hadamard product}, Linear Algebra Appl., {\bf 26} (1979), 203--241.

\bibitem{AH1994} T. Ando and F. Hiai, {\it Log majorization and complementary Golden--Thompson type inequalities}, Linear Algebra Appl.,{\bf 197/198}(1994), 113--131.

 \bibitem{BCP2014} T. Baumgratz, M. Cramer and M. B. Plenio, {\it Quantifying coherence}, Phys. Rev. Lett., {\bf 113}(14)(2014), 140401.

\bibitem{BLP2005} N. Bebiano, R. Lemos, J. da Provid\^{e}ncia, {\it Inequalities for quantum relative entropy}, Linear Algebra Appl., {\bf 401} (2005),  159--172.
 
 \bibitem{B1996} R. Bhatia, {\it Matrix Analysis}, Springer--Verlag, 1996.
 
%\bibitem{BG} R. Bhatia and P. Grover, {\it Norm inequalities related to the matrix geometric mean}, Linear Algebra Appl., {\bf 437} (2012), 726--733.

 \bibitem{BS1982} 
V. P. Belavkin and P. Staszewski, {\it $C^{*}$-algebraic generalization of relative entropy and entropy}, Ann. Inst. H. Poincar\'e Sect. A, {\bf 37} (1982), 51--58.
 
 \bibitem{BLMS2009} J. C. Bourin,  E. Y. Lee, M. Fujii, and Y. Seo, {\it A matrix reverse H\"older inequality},  Linear Algebra Appl., {\bf 431} (11) (2009),  2154--2159.

\bibitem{DF2024} R. Dumitru and J. A. Franco, {\it Log--majorization related to the spectral geometric and R\'enyi means}, Acta Sci. Math. (Szeged), \url{https://doi.org/10.1007/s44146-024-00128-8}.

\bibitem{FK1989}
J. I. Fujii and E. Kamei, {\it Relative operator entropy in non--commutative information theory}, Math. Japon., {\bf34} (1989), 341--348.

\bibitem{FMPS2} 
M. Fujii, J. Mi\'{c}i\'{c} Hot, J. Pe\v{c}ari\'{c} and Y. Seo, 
{\it Recent Developments of Mond--Pe\v{c}ari\'{c} Method in Operator Inequalities}, 
Monographs in Inequalities {\bf 4}, Element, Zagreb, 2012. 
 
 \bibitem{FS2021} M. Fujii and Y. Seo, {\it Matrix trace inequalities related to the Tsallis relative entropies of real order}, J. Math. Anal. Appl., {\bf 498} (2021), 124877.
 
 \bibitem{FYK2004} S. Furuichi, K. Yanagi and K. Kuriyama, Fundamental properties of Tsallis relative entropy, J. Math. Phys., {\bf 45} (2004), 4868--4877.
 
 \bibitem{FM2020book} S. Furuichi and H. R. Moradi, {\it Advances in mathematical inequalities}, De Gruyter, 2020.
% \bibitem{GLT2021} L.Gan, X.Liu and T.-Y. Tam, {\it On two geometric means and sum of adjoint orbits},  Linear Algebra Appl., {\bf 631}(2021), 156--173.
 
\bibitem{FMPS1} 
T. Furuta, J. Mi\'{c}i\'{c} Hot, J. Pe\v{c}ari\'{c} and Y. Seo, 
{\it Mond-Pe\v{c}ari\'{c} Method in Operator Inequalities}, 
Monographs in Inequalities {\bf 1}, Element, Zagreb, 2005. 

\bibitem{GT2022} L. Gan and T.-Y. Tam, {\it Inequalities and limits of weighted spectral geometric mean}, Linear and Multilinear Algebra, {\bf 72}(2)(2024), 261--282.
  
\bibitem{GK2024} L. Gan and S. Kim, {\it Weak log--majorization between the geometric and Wasserstein means}, J. Math. Anal. Appl., {\bf 530} (2024), 127711.
  
  
\bibitem{GJBBHF2020}  M.--L. Guo, Z.--X. Jin, B. Li, B. Hu, and S.--M. Fei, {\it Quantifying quantum coherence based on the Tsallis relative operator entropy},  Quantum Information Processing, 
 {\bf 19} (2020), Art. 382.

 \bibitem{H2019} F. Hiai, {\it Log--majorization related to Renyi divergence}, Linear Algebra Appl., {\bf 563} (2019), 255--276.

\bibitem{HM2017} F. Hiai and M. Mosonyi, {\it Different quantum $f$--divergences
and the reversibility of quantum operations}, Rev. Math. Phys., {\bf 29} (2017), 1750023.


\bibitem{HP1991} F. Hiai and D. Petz,  {\it The proper formula for relative entropy in asymptotics in quantum probability}, Commun. Math. Phys., {\bf 143}(1991), 99--114.

\bibitem{HP1993} F. Hiai and D. Petz, {\it The Golden--Thompson trace inequality is complemented}, Linear Algebra Appl., {\bf 181}(1993), 153--185.

\bibitem{HP2014} F. Hiai and D. Petz, {\it Introduction to matrix analysis and applications}, Springer--Verlag, 2014.
 
\bibitem{K1948} L. V. Kantorovich, {\it Functional analysis and applied mathematics} (in Russian), Uspekhi Mat. Nauk. {\bf 3} (6) (1948), 89--185.
 
 
 \bibitem{K2021} S. Kim, {\it Operator inequalities and gyrolines of the weighted geometric means},
 Math. Inequal. Appl., {\bf 24}(2)(2021), 491--514.
 
 \bibitem{KL} S. Kim and H. Lee, {\it Relative operator entropy related with the spectral geometric mean}, Anal. Math. Phys., {\bf 5}(3)(2015), 233--240.
 
\bibitem{KA} F. Kubo and T. Ando, {\it Means of positive linear operators}, Math. Ann., {\bf 246} (1980), 205--224.

 \bibitem{L2009} E. Y. Lee,  {\it A matrix reverse Cauchy--Schwarz inequality},  Linear Algebra Appl., {\bf 430} (2/3) (2009), 805--810.
 
  \bibitem{LL2007} H. Lee and Y. Lim, {\it Metric and spectral geometric means on symmetric cones}, Kyungpook Math. J., {\bf47} (1) (2007), 133--150.
 
% \bibitem{M1970} F. T. Man, {\it Some inequalities for positive definite symmetric matrices}, SIAM J. Appl. Math., {\bf 19} (4) (1970), 679--681,
 
 \bibitem{MO1979}  
A. W. Marshall and I. Olkin, {\it Inequalities: Theory of majorization and its applications}, Academic Press, 1979.
 
 \bibitem{MF2009} A. Matsumoto and M. Fujii, {\it Generalizations of reverse Bebiano--Lemos--Provid\^{e}ncia inequality}, Linear Algebra Appl., {\bf 430} (5/6) (2009),  1544--1549.
 
\bibitem{M2013} K. Matsumoto, {\it A new quantum version of $f$--divergence}, arXiv:1311.4722v4.
 

 
  \bibitem{MFS2023} H. R. Moradi, S. Furuichi and M. Sababheh, {\it Operator spectral geometric versus geometric mean}, Linear and Multilinear Algebra, {\bf 72}(6)(2024), 997--1016.
  
\bibitem{NS} R. Nakamoto and Y. Seo, {\it A complement of the Ando-Hiai inequality and norm inequalities for the geometric mean}, Nihonkai Math. J., {\bf 18} (2007), 43--50.

\bibitem{P1986} D. Petz, {\it Quasi--entropies for finite quantum system}, Rep. Math. Phys., {\bf 23} (1986), 57--65.
 
 \bibitem{Petz2008} D. Petz, {\it Quantum information theory and quantum statistics}, Springer--Verlag, 2008.
 
  \bibitem{PR1998} D. Petz and M. B. Ruskai, {\it Contraction of generalized relative entropy under stochastic mappings on matrices} Infin. Dimens. Anal. Quant. Prob., {\bf 1}(1)(1998), 83--89,
1998.
 

 
 \bibitem{R2016} A. E. Rastegin, {\it Quantum--coherence quantifers based on the Tsallis relative $\alpha$ entropies}, Phys. Rev. A, {\bf 93} (2016), 032136.
 
\bibitem{S2007} Y. Seo, {\it Norm inequalities for the chaotically geomtric mean and its reverse}, J. Math. Inequal., {\bf 1} (2007), 39--43.

 \bibitem{S2019} Y. Seo, {\it Matrix trace inequalities on Tsallis relative entropy of negative order}, J. Math. Anal. Appl., {\bf 472} (2019), 1499--1508.
 
 \bibitem{Umegaki} 
H. Umegaki, {\it Conditional expectation in an operator algebra, IV (entropy and information)}, Kodai Math. Sem. Rep., {\bf 14} (1962), 59--85.
 
 \bibitem{ZY2018} H. Zhao and C.--S. Yu, {\it Coherence measure in terms of the Tsallis relative $\alpha$ entropy}, Scientific Reports, {\bf 8}  (2018), Art. 299. 
 \end{thebibliography}
\end{document}